\providecommand{\U}[1]{\protect\rule{.1in}{.1in}}
\newtheorem{theorem}{Theorem}[section]
\newtheorem{proposition}[theorem]{Proposition}
\newtheorem{lemma}[theorem]{Lemma}
\newtheorem{definition}[theorem]{Definition}
\theoremstyle{remark}
\newtheorem{example}[theorem]{Example}
\newtheorem{remark}[theorem]{Remark}
\begin{document}
\title{Inverse eigenvalue problem for tensors}

\author{Ke Ye}
\address{Computational and Applied Mathematics Initiative, Department of Statistics,
University of Chicago, Chicago, IL 60637-1514.} 
\email{kye@math.uchicago.edu}

\author{Shenglong Hu}
\address{Department of  Mathematics, School of Science, Tianjin University, Tianjin 300072, China.}
\email{timhu@tju.edu.cn}

\begin{abstract}
Given a tensor $\mathcal T\in \mathbb T(\mathbb C^n,m+1)$, the space of tensors of order $m+1$ and dimension $n$ with complex entries, it has $nm^{n-1}$ eigenvalues (counted with algebraic multiplicities).  The inverse eigenvalue problem for tensors is a generalization of that for matrices. Namely, given a multiset $S\in \mathbb C^{nm^{n-1}}/\mathfrak{S}(nm^{n-1})$ of total multiplicity $nm^{n-1}$, is there a tensor in $\mathbb T(\mathbb C^n,m+1)$ such that the multiset of eigenvalues of $\mathcal{T}$ is exactly $S$? The solvability of the inverse eigenvalue problem for tensors is studied in this article. With tools from algebraic geometry, it is proved that the necessary and sufficient condition for this inverse problem to be generically solvable is $m=1,\ \text{or }n=2,\ \text{or }(n,m)=(3,2),\ (4,2),\ (3,3)$.
\end{abstract}
\keywords{Tensor, eigenvalue, inverse problem}
\subjclass[2010]{15A18; 15A69; 65F18}
\maketitle

\section{Introduction}\label{sec:intro}

Eigenvalue of a tensor, as a natural generalized notion of the eigenvalue of a square matrix, has been attracting increasingly attention
in fields related to numerical multilinear algebra (see \cite{cs13,cpz08,fo14,lqz13,oo12,nqww07,q14} and references therein), since the independent work by Lim \cite{l05} and Qi \cite{q05}. 

For a tensor of order $m+1$ and dimension $n$, its eigenvalues are the roots of the characteristic polynomial, which is a monic polynomial with degree $nm^{n-1}$ \cite{q05,hhlq13}. As a consequence, the number of eigenvalues, counted with multiplicities, is equal to $nm^{n-1}$. This number, grows however exponentially along with $m$ and $n$, differs largely from its matrix counterpart. We can alternatively (actually equivalently) define the eigenvalues as solutions of a system of polynomial equations resembles the system of eigenvalue equations for a matrix (cf.\ Definition~\ref{def:eigenvalue-eigenvector}). However, both computation and structures of the eigenvalues are very complicated, and tough to investigate \cite{q05,hhlq13,hy15}. The situation would be improved if the eigenvalues are shown to lie in a variety in $\mathbb C^{nm^{n-1}}$ with a much smaller dimension.  


Let $\mathbb T(\mathbb C^n,m+1)$ be the space of tensors of order $m+1$ and dimension $n$ with entries in the field $\mathbb C$ of complex numbers. When $m=1$, we get the space of $n\times n$ matrices with complex components. The eigenvalues of a matrix $A=(a_{ij})\in\mathbb T(\mathbb C^n,2)$, as roots of the characteristic polynomial
\[
\operatorname{det}(\lambda I-A)=\lambda^n+c_{n-1}(A)\lambda^{n-1}+c_1(A)\lambda+c_0(A),
\]
can be written as hypergeometric series in terms of the components $a_{ij}$'s (cf.\ \cite{s02}), since $c_i(A)\in\mathbb C[A]$ is a homogeneous polynomial of degree $n-i$ for $i=1,\dots,n$. We can collect the $n$ hypergeometric series to form a multiset-valued mapping $\phi : \mathbb T(\mathbb C^n,2)\rightarrow \mathbb C^n/\mathfrak{S}(n)$, 
where $\mathfrak{S}(n)$ is the group of permutations on $n$ elements. Thus, $\phi(A)$ is the multiset of eigenvalues of $A$. The set $\mathbb C^n/\mathfrak{S}(n)$ is the $n$th symmetric product of $\mathbb C$, and (cf.\ \cite{h92})
\[
\operatorname{dim}(\mathbb C^n/\mathfrak{S}(n))=n.
\]
A well-known result from linear algebra (cf.\ \cite{hj85}) is that this mapping is surjective, i.e.,
\begin{equation}\label{matrix-surjective}
\operatorname{image}(\phi)=\mathbb C^n/\mathfrak{S}(n).
\end{equation}
This implies that any given $n$-tuple of complex numbers can be realized as the eigenvalues of an $n\times n$ matrix. 

It is shown in \cite{hhlq13} that the codegree $i$ coefficient of the characteristic polynomial of a tensor is a homogeneous polynomial of degree $i$ in terms of the tensor components. Therefore, we can define the multiset-valued eigenvalue mapping $\phi : \mathbb T(\mathbb C^n,m+1)\rightarrow\mathbb C^{nm^{n-1}}/\mathfrak{S}(nm^{n-1})$ in a similar way for the tensor eigenvalues. Here we use the same symbol $\phi$, for the sake of notational simplicity, for all positive integers $m$ and $n$, which would be clear for the content. 
Likewise, a basic question arises for tensors when one is trying to understand their eigenvalues: 
\[
\textit{what can the eigenvalues of tensors in a given space be?}
\]
This question is of course very general, hard; and deserves a very long way and much continuous effort to answer. Reversely, we can ask the question about \textit{the existence of tensors for a given multiset of eigenvalues}, which would have the nomenclature \textit{inverse eigenvalue problem for tensors} in general. We refer to \cite{f77,cg05} and references therein for the inverse eigenvalue problems for matrices. 

In this article, we will 
study the counterpart of \eqref{matrix-surjective} for tensors: when the eigenvalues fulfill the whole quotient space? It turns out that this question is hard to answer. However, with the help of concepts from algebraic geometry, we are able to answer a weaker version of the question: ``when the eigenvalues almost fulfill the whole quotient space?",  or more precisely in mathematical language: ``when does the image of the multiset-valued eigenvalue map contains an open dense subset of $\mathbb C^{nm^{n-1}}/\mathfrak{S}(nm^{n-1})$?" Unless otherwise stated, we will always adopt the Zariski topology for the ambient space. The map $\phi$ is dominant if its image 
contains an open dense subset of  $\mathbb C^{nm^{n-1}}/\mathfrak{S}(nm^{n-1})$ (cf.\  Definition~\ref{prop:dense-eig}).
We have the following main theorem of this article.
\begin{theorem}[Dominant Theorem]\label{theorem:dominant}
The eigenvalue map $\phi : \mathbb{T}(\mathbb C^n,m+1)\rightarrow\mathbb C^{nm^{n-1}}/\mathfrak{S}(nm^{n-1})$ is dominant if and only if
\[
m=1,\ \text{or }n=2,\ \text{or }(n,m)=(3,2),\ (4,2),\ (3,3).  
\]
\end{theorem}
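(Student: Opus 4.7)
The plan is to reduce dominance of $\phi$ to a generic rank computation for its differential, and then bound that rank by the dimension of an ambient subspace. First, identifying $\mathbb C^N/\mathfrak S(N)\cong\mathbb C^N$ via elementary symmetric polynomials (with $N=nm^{n-1}$), the map $\phi$ becomes the polynomial coefficient map $c:\mathbb T(\mathbb C^n,m+1)\to\mathbb C^N$ sending a tensor to the coefficient sequence of its characteristic polynomial. Dominance of $\phi$ is equivalent to $\dim\overline{c(\mathbb T(\mathbb C^n,m+1))}=N$, which in turn is the generic rank of the differential $dc$.

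To compute this rank, I fix a tensor $\mathcal T_0$ with $N$ simple eigenpairs $(\lambda_k,v^{(k)})_{k=1}^N$ and apply implicit differentiation to the eigenvalue system $\mathcal T(\cdot,x,\ldots,x)-\lambda x^{[m]}=0$. Writing $J^{(k)}$ for the $x$-Jacobian of this system at $v^{(k)}$ and $w^{(k)}$ for a generator of its (generically one-dimensional) left kernel, a short calculation gives
\[
d\lambda_k(d\mathcal T)=\frac{\langle w^{(k)}\otimes (v^{(k)})^{\otimes m},\,d\mathcal T\rangle}{\langle w^{(k)},(v^{(k)})^{[m]}\rangle}.
\]
Consequently, the rank of $d\phi$ at $\mathcal T_0$ equals the dimension of the linear span of the $N$ rank-one tensors $w^{(k)}\otimes (v^{(k)})^{\otimes m}$. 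All of these lie in the subspace $\mathbb C^n\otimes\operatorname{Sym}^m\mathbb C^n\subset\mathbb T(\mathbb C^n,m+1)$ of dimension $n\binom{n+m-1}{m}$, giving an a priori bound $\operatorname{rank}(d\phi)\le n\binom{n+m-1}{m}$.

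The necessary direction follows immediately: if $N=nm^{n-1}>n\binom{n+m-1}{m}$, i.e., $m^{n-1}>\binom{n+m-1}{m}$, then $\phi$ cannot be dominant. A direct combinatorial verification shows that the reverse inequality $m^{n-1}\le\binom{n+m-1}{m}$ holds precisely for $m=1$, $n=2$, or $(n,m)\in\{(3,2),(4,2),(3,3)\}$. For the sufficient direction, by upper semi-continuity of corank, in each listed case it suffices to exhibit a single tensor $\mathcal T_0$ whose $N$ rank-one differentials are linearly independent. The matrix case $m=1$ is trivial---the outer products $e_k\otimes e_k$ arising from a diagonal matrix with distinct entries are visibly independent---and the case $n=2$ will be handled by an $\operatorname{SL}_2$-symmetric tensor in $\mathbb C^2\otimes\operatorname{Sym}^m\mathbb C^2$, where the decomposition into irreducible components can be exploited.

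The main obstacle is the sufficient direction for the sporadic cases $(3,2)$, $(4,2)$, $(3,3)$. The pairs $(v^{(k)},w^{(k)})$ are not free points of $\mathbb P^{n-1}\times\mathbb P^{n-1}$ but are coupled through the eigenvalue system of a single tensor. Even granting generic non-defectivity of the secant varieties of the Segre-Veronese variety $\mathbb P^{n-1}\times\nu_m(\mathbb P^{n-1})\subset\mathbb P(\mathbb C^n\otimes\operatorname{Sym}^m\mathbb C^n)$, which would guarantee that $N$ freely chosen rank-one tensors of the required form can be made linearly independent, one must still verify that \emph{eigen-derived} configurations can realize this generic span. I plan to handle each sporadic case with an explicit $\mathcal T_0$ of sufficiently restricted (e.g., monomial or block-diagonal) structure for which the eigenpairs can be written down in closed form, reducing linear independence to the non-vanishing of an explicit determinant.
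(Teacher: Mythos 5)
Your reduction of dominance of $\phi$ to a generic rank computation for the differential of the coefficient map is exactly the paper's strategy (Propositions~\ref{prop:tensor} and \ref{prop:differential}), and your dimension bound $\operatorname{rank}(d\phi)\le n\binom{n+m-1}{m}$ via the observation that the relevant linear functionals live in $\mathbb C^n\otimes\operatorname{S}^m(\mathbb C^n)$ is the same reduction the paper carries out with $\operatorname{eSym}$; combined with the combinatorial inequality (the paper's Lemma~\ref{lemma:size}) this correctly settles the necessary direction. The genuinely different ingredient in your proposal is the perturbation formula
\[
d\lambda_k(d\mathcal T)=\frac{\langle w^{(k)}\otimes (v^{(k)})^{\otimes m},\,d\mathcal T\rangle}{\langle w^{(k)},(v^{(k)})^{[m]}\rangle},
\]
which recasts the rank of $d\phi$ at a tensor with $N$ simple eigenpairs as the dimension of the span of the rank-one tensors $w^{(k)}\otimes(v^{(k)})^{\otimes m}$. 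This is a correct and attractive geometric reformulation that the paper does not use: the paper instead works with the characteristic-polynomial coefficient map and analyzes its Jacobian matrix directly (via the Sylvester matrix for $n=2$, via Macaulay's formula for the exceptional cases).

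The gap is the sufficient direction, which you have not actually carried out. For $n=2$ you defer to an unspecified ``$\operatorname{SL}_2$-symmetric tensor in $\mathbb C^2\otimes\operatorname{Sym}^m\mathbb C^2$'' exploiting the irreducible decomposition, with no computation. The paper's argument here (Lemmas~\ref{lemma:diagonal}, \ref{lemma:antidiagonal}, \ref{lemma:nonsingular}) is a careful monomial-level analysis of a $2m\times 2m$ submatrix $K$ of the Jacobian: it identifies a specific monomial in $a_1,a_m,b_{m-1}$ of maximal $b_{m-1}$-degree and shows it occurs in $\det(K)$ with a nonzero coefficient, proving generic nonsingularity for every $m$. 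Your rank-one-tensor picture does not obviously circumvent this work: as you yourself note, the pairs $(v^{(k)},w^{(k)})$ are coupled through the eigensystem of a single tensor and are not free points of a Segre--Veronese variety, so linear independence of the $2m$ tensors $w^{(k)}\otimes(v^{(k)})^{\otimes m}$ still demands a concrete argument for every $m\ge 2$. (As a sanity check, the tensors in $\wedge^2\mathbb C^2\otimes\operatorname{S}^{m-1}\mathbb C^2$ treated in the paper's Section~\ref{sec:ext} have $0$ as an eigenvalue of multiplicity $m-1$, so they do not even have $N$ simple eigenpairs; finding a tensor to which your formula applies and for which independence is provable is a real task.) For the three sporadic cases, both your plan and the paper fall back on exhibiting an explicit tensor and numerically checking full rank of the Jacobian; you have not produced the tensors or the computation, whereas the paper does this in Sections~\ref{sec:32} and \ref{sec:42-33} using Macaulay's resultant formula for the characteristic polynomial. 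So the necessary half of the theorem is in hand, but the sufficient half remains a plan rather than a proof.
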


\begin{proof}
The case $m=1$ is the trivial matrix counterpart. For the other cases, 
the necessity follows from Proposition~\ref{prop:necessary}; and the sufficiency follows from
Propositions~\ref{prop:dominant-n2}, \ref{prop:dominant-33} and \ref{prop:dominant-34-42}.
\end{proof} 
 
Since the topology on $\mathbb C^{nm^{n-1}}/\mathfrak{S}(nm^{n-1})$ is the Zariski topology, the fact that the image of $\phi$ contains an open dense subset of $\mathbb C^{nm^{n-1}}/\mathfrak{S}(nm^{n-1})$ implies that for almost all multiset $S$ in $\mathbb C^{nm^{n-1}}/\mathfrak{S}(nm^{n-1})$, there exists a tensor $\mathcal{T}$ in $\mathbb{T}(\mathbb{C}^n,m+1)$ such that the set of eigenvalues of $\mathcal{T}$ is exact $S$. More precisely, the fact that the image of $\phi$ contains an open dense subset of $\mathbb C^{nm^{n-1}}/\mathfrak{S}(nm^{n-1})$ implies that the probability that a randomly picked multiset $S$ can be realized as the set of eigenvalues of a tensor in $\mathbb{T}(\mathbb{C}^n,m+1)$ is one.

\section{Preliminaries}\label{sec:preliminary}
\subsection{Eigenvalues of tensors}\label{sec:eigenvalue}
There are two most popular definitions of tensor eigenvalues in the literature \cite{q05,l05}. Throughout this article, eigenvalues and eigenvectors of tensors are restricted to the next definition. 
\begin{definition}[Eigenvalues and Eigenvectors \cite{q05,l05}]\label{def:eigenvalue-eigenvector}
Let tensor $\mathcal T=(t_{ii_1\ldots i_m})\in\mathbb{T}(\mathbb{C}^n,m+1)$. A number $\lambda\in\mathbb C$ is called an \textit{eigenvalue} of $\mathcal T$, if there exists a vector $\mathbf x\in\mathbb C^n\setminus\{\mathbf 0\}$ which is called an \textit{eigenvector} such that
\begin{equation}\label{eigenvalue-equation}
\mathcal T\mathbf x^m=\lambda\mathbf x^{[m]},
\end{equation}
where $\mathbf x^{[m]}\in\mathbb C^n$ is an $n$-dimensional vector with its $i$-th component being $x_i^{m}$, and $\mathcal T\mathbf x^m\in\mathbb C^n$ with
\[
\big(\mathcal T\mathbf x^m\big)_i:=\sum_{i_1,\dots,i_m=1}^nt_{ii_1\dots i_m}x_{i_1}\dots x_{i_m}\ \text{for all }i=1,\dots,n.
\]
\end{definition}

A multiset $S$ of complex numbers is a $2$-tuple $(A,\psi)$ with a set $A\subseteq\mathbb C$ and a map $\psi : A\rightarrow\mathbb N_+$. For any $a\in A$, $\psi(a)$ is the \textit{multiplicity} of the element $a$ in $S$. The summation $\sum_{a\in A}\psi(a)$ is the \textit{total multiplicity} of the multiset $S$. 
The multiset of eigenvalues of a given tensor $\mathcal T$, which is denoted as $\sigma(\mathcal T)$, 
is defined as $(A,\psi)$ with $A$ being the set of eigenvalues of $\mathcal T$ and the multiplicity map $\psi$ being the algebraic multiplicity of the eigenvalue. Then, $\sigma(\mathcal T)$ is always of total multiplicity being finite \cite{q05,hhlq13},  and it is the multiset of roots of the univariate polynomial 
\[
\chi(\lambda)=\operatorname{det}(\lambda\mathcal I-\mathcal T)
\]
which is called the \textit{characteristic polynomial} of $\mathcal T$ \cite{q05}. The degree of $\chi(\lambda)$ for $\mathcal T\in\mathbb{T}(\mathbb{C}^n,m+1)$ is $nm^{n-1}$. Thus, $\sigma(\mathcal T)$ can be identified as an element in $\mathbb C^{nm^{n-1}}/\mathfrak{S}(nm^{n-1})$ for any $\mathcal T\in\mathbb T(\mathbb C^n,m+1)$. 
We refer to \cite{hhlq13,hy15} for the definitions of tensor determinant and algebraic multiplicity, and more facts on the characteristic polynomial. 

The multiset-valued eigenvalue map $\phi : \mathbb{T}(\mathbb{C}^n,m+1)\rightarrow
\mathbb C^{nm^{n-1}}/\mathfrak{S}(nm^{n-1})$  is defined as 
\[
\phi(\mathcal T)=\sigma(\mathcal T).
\]

As long as we are concerning on eigenvalues of tensors, which are solely related to $\mathcal T\mathbf x^{m}$, it is 
sufficient to consider the tensor space $\mathbb{TS}(\mathbb{C}^n,m+1):=\mathbb C^n\otimes\operatorname{S}^{m}(\mathbb C^n)$ (cf.\ \cite[Section~5.2]{hy15}). For any $\mathcal T\in\mathbb T(\mathbb C^n,m+1)$, we can symmetrize its $i$th slice  $\mathcal T_i:=(t_{ii_1\dots i_m})_{1\leq i_1,\dots,i_m\leq n}$ via
\[
\big(\mathcal T\mathbf x^{m}\big)_i=\langle\operatorname{Sym}(\mathcal T_i),\mathbf x^{\otimes (m)}\rangle:=\sum_{i_1,\dots,i_m=1}^n\big(\operatorname{Sym}(\mathcal T_i)\big)_{i_1\dots i_m}x_{i_1}\dots x_{i_m}\ \text{for all }\mathbf x\in\mathbb C^n,
\]
where $\operatorname{Sym}(\mathcal T_i)$ is the symmetrization of the tensor $\mathcal T_i$ as a symmetric tensor in the sense of the above equalities. We refer to \cite{l12} for basic concepts on tensors. Therefore, for every tensor $\mathcal T\in\mathbb T(\mathbb C^n, m+1)$, we associate it an element $\operatorname{eSym}(\mathcal T)$ in $\mathbb{TS}(\mathbb{C}^n,m+1)$ by symmetrizing its slices. It is easy to see that
\[
\mathcal T\mathbf x^{m}=\operatorname{eSym}(\mathcal T)\mathbf x^{m}\ \text{for all }\mathbf x\in\mathbb C^n.
\]
We see that all tensors in the fibre of the surjective map $\operatorname{eSym} : \mathbb T(\mathbb C^n,m+1)\rightarrow\mathbb{TS}(\mathbb C^n,m+1)$ have the same defining equations for the eigenvalue problem. Therefore, we have
\[
\phi(\mathbb{T}(\mathbb{C}^n,m+1))=\phi(\mathbb{TS}(\mathbb{C}^n,m+1)). 
\]

\subsection{Algebraic geometry}\label{sec:algebraic}
We list here some notions from algebraic geometry which we will use in this article. We refer to \cite{h77,h92,s77,clo98} for basic algebro-geometric concepts. 
\begin{enumerate}
\item An \textit{algebraic variety} in $\mathbb{C}^n$ is a set of common zeros of some polynomials in $n$ variables. In particular, the linear space $\mathbb{C}^n$ is an algebraic variety.
\item The coordinate ring $\mathbb{C}[X]$ of an algebraic variety $X$ is defined to be the quotient ring $\mathbb{C}[x_1,\cdots, x_n]/ I(X)$ where $I(X)$ is the ideal of all polynomials vanishing on $X$.
\item A map $f : X \rightarrow Y$ between two algebraic varieties $X$ and $Y$ is said to be a \textit{morphism} if $f$ is induced by a homomorphism of coordinate rings $\psi : \mathbb C[Y ] \rightarrow\mathbb C[X]$. In particular, polynomial maps between two linear spaces are morphisms. 
\item Let $f$ be a morphism between $X$ and $Y$. If its image is Zariski dense, i.e., $\overline{f(X)} = Y$ , then $f$ is called a \textit{dominant morphism}. 
\item An algebraic variety $X$ is \textit{irreducible} if $X=X_1\cup X_2$ for closed subvarieties $X_1$ and $X_2$ implies that $X_1=X$ or $X_2=X$.
\item We say that a property $P$ holds for a \textit{generic point} in $\mathbb{C}^n$ if the set of points in $\mathbb{C}^n$ that do not satisfy $P$ is contained in a proper subvariety of $\mathbb{C}^n$. For example, fix an algebraic variety $X\subset \mathbb{C}^n$, we say that a generic point in $\mathbb{C}^n$ is not in $X$.
\end{enumerate}
\begin{remark}
We remark here that if we put the outer Lebesgue measure on $\mathbb{C}^n\simeq \mathbb{R}^{2n}$ then a proper subvariety $X$ of $\mathbb{C}^n$ has measure zero. 
Hence a property $P$ holds for a generic point in $\mathbb{C}^n$ implies that the probability that a randomly picked point from $\mathbb{C}^n$ has property $P$ is one. 
\end{remark}

The main result of this article will be proved based on the following algebraic version of the open mapping theorem. 
\begin{proposition}[\cite{t02}]\label{prop:dense}
If $f:X\to Y$ is a dominant morphism between two irreducible algebraic varieties then $f(X)$ contains an open dense subset of $Y$.
\end{proposition}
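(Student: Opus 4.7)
My plan is to derive this statement as a consequence of Chevalley's theorem on the image of a morphism, combined with the definition of irreducibility. Concretely, the strategy reduces to three ingredients: (i) the image $f(X)$ is a constructible subset of $Y$; (ii) being dominant means the closure of this constructible set fills out $Y$; and (iii) a constructible set whose closure is an irreducible variety must contain a nonempty open subset of that variety.

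First I would invoke Chevalley's theorem, which guarantees that the image of a morphism of algebraic varieties is constructible, i.e., a finite union of locally closed subsets. This lets me write
\[
f(X)=\bigcup_{i=1}^{N}(U_{i}\cap Z_{i}),
\]
where each $U_{i}$ is open and each $Z_{i}$ is closed in $Y$; after replacing $Z_{i}$ by $\overline{U_{i}\cap Z_{i}}$ if necessary, I may assume each piece $U_{i}\cap Z_{i}$ is dense in $Z_{i}$. Next I would use the hypothesis that $f$ is dominant, $\overline{f(X)}=Y$, to obtain
\[
Y=\overline{\bigcup_{i=1}^{N}(U_{i}\cap Z_{i})}=\bigcup_{i=1}^{N}Z_{i},
\]
expressing $Y$ as a finite union of its closed subvarieties $Z_{i}$.

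Since $Y$ is irreducible, this forces $Z_{j}=Y$ for some index $j$. For this $j$ the piece $U_{j}\cap Z_{j}=U_{j}$ is an open subset of $Y$ contained in $f(X)$; moreover $U_{j}$ cannot be empty, because then $Z_{j}=\overline{U_{j}\cap Z_{j}}$ would be empty rather than equal to $Y$. Finally, any nonempty Zariski-open subset of an irreducible variety is automatically dense, so $U_{j}$ is the desired open dense subset of $Y$ inside $f(X)$.

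The main obstacle is really packaged into step (i): Chevalley's constructibility theorem itself is nontrivial, usually proved via a noetherian induction reducing to the case of a projection from an affine space, and then to elimination theory (Gr\"obner bases or resultants) to describe the image of a single coordinate projection. In the present write-up I would treat Chevalley's theorem as a black-box citation to the reference \cite{t02}, since once that input is available the remaining deduction is a short exercise in the irreducibility of $Y$ and elementary properties of Zariski closure as performed above.
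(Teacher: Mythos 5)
Your argument is correct. Note that the paper itself supplies no proof of this proposition at all --- it is stated as a quoted fact with a citation to \cite{t02} --- so there is no ``paper's route'' to compare against; what you have written is the standard textbook derivation. The logic is sound at every step: Chevalley's theorem gives constructibility of $f(X)$; normalizing each piece so that $U_i\cap Z_i$ is dense in $Z_i$ is harmless because $\overline{U_i\cap Z_i}\subseteq Z_i$ and the intersection with $U_i$ is unchanged; taking closures of the finite union and using $\overline{f(X)}=Y$ writes the irreducible $Y$ as a finite union of the closed sets $Z_i$, forcing some $Z_j=Y$; and then $U_j=U_j\cap Z_j\subseteq f(X)$ is a nonempty (since its closure is $Y$) open subset of $Y$, hence dense by irreducibility. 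The only caveat is the one you already flag: essentially all of the content lives in Chevalley's constructibility theorem, which you treat as a black box --- but since the paper black-boxes the entire proposition by citing \cite{t02}, this is an entirely reasonable division of labor, and your reduction from Chevalley to the stated claim is complete and gap-free.
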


The following two facts are obvious to those who are familiar with algebraic geometry, while we supply proofs here for completeness. 
\begin{proposition}\label{prop:necessary-general}
If a morphism $f : X \rightarrow Y$ between two algebraic varieties $X$ and $Y$ is dominant, then it holds that
\[
\operatorname{dim}(X)\geq \operatorname{dim}(Y).
\]
\end{proposition}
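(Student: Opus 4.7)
The plan is to reduce to the irreducible case and then translate dominance into an algebraic statement about coordinate rings or function fields, from which the dimension inequality drops out by a transcendence degree comparison.

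First I would decompose both varieties into irreducible components: write $X = X_1 \cup \cdots \cup X_r$ and $Y = Y_1 \cup \cdots \cup Y_s$. Since $f$ is continuous in the Zariski topology, $\overline{f(X)} = \bigcup_i \overline{f(X_i)}$, and this equals $Y$ by dominance. Because each $\overline{f(X_i)}$ is irreducible (being the closure of the image of an irreducible variety under a continuous map), and each $Y_j$ is an irreducible component of $Y$, each $Y_j$ must be contained in some $\overline{f(X_{i(j)})}$. Pick a component $Y_{j_0}$ of maximal dimension, so $\dim Y = \dim Y_{j_0}$; then the restriction $f|_{X_{i(j_0)}} : X_{i(j_0)} \to \overline{f(X_{i(j_0)})}$ is a dominant morphism of irreducible varieties with $\overline{f(X_{i(j_0)})} \supseteq Y_{j_0}$. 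It therefore suffices to prove the inequality in the irreducible case, since $\dim X \geq \dim X_{i(j_0)}$.

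Next, assuming $X$ and $Y$ are irreducible, I would use that the comorphism $f^{\ast} : \mathbb{C}[Y] \to \mathbb{C}[X]$ is injective: if $g \in \mathbb{C}[Y]$ satisfies $f^{\ast}(g) = g \circ f = 0$, then $g$ vanishes identically on $f(X)$, hence on its Zariski closure $\overline{f(X)} = Y$, forcing $g = 0$ in $\mathbb{C}[Y]$. Because $Y$ is irreducible, $\mathbb{C}[Y]$ is an integral domain, and the injection $f^{\ast}$ extends to an injection of the fraction fields $\mathbb{C}(Y) \hookrightarrow \mathbb{C}(X)$.

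Finally, I would invoke the identification of Krull dimension with transcendence degree for irreducible affine varieties: $\dim X = \operatorname{tr.deg}_{\mathbb{C}} \mathbb{C}(X)$ and $\dim Y = \operatorname{tr.deg}_{\mathbb{C}} \mathbb{C}(Y)$. Since a field extension $\mathbb{C}(Y) \hookrightarrow \mathbb{C}(X)$ cannot decrease transcendence degree over $\mathbb{C}$, we conclude $\dim X \geq \dim Y$, as desired. There is no serious obstacle here; the one subtlety worth stating carefully is the reduction to irreducible components, which requires matching a maximal-dimensional component of $Y$ with a suitable component of $X$ on which the restriction remains dominant onto its closure.
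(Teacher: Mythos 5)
Your proof is correct and its core is exactly the paper's argument: dominance forces the comorphism $\mathbb{C}[Y]\hookrightarrow\mathbb{C}[X]$ to be injective, this extends to an inclusion of function fields $\mathbb{C}(Y)\hookrightarrow\mathbb{C}(X)$, and the transcendence degree over $\mathbb{C}$ cannot drop under a field extension, giving $\dim X\ge\dim Y$. The one thing you add is the opening reduction to irreducible components; the paper's proof goes straight to $\mathbb{C}(X)$ and $\mathbb{C}(Y)$, which tacitly assumes $X$ and $Y$ irreducible even though the statement is for general varieties, whereas you explicitly pick a top-dimensional component $Y_{j_0}$ and a component $X_{i(j_0)}$ with $\overline{f(X_{i(j_0)})}\supseteq Y_{j_0}$ before invoking the irreducible case. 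That reduction is sound and tightens a gap the paper leaves implicit, but both proofs turn on the same transcendence-degree comparison.
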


\begin{proof}
It is known that $\operatorname{dim}(X)$ is the same as the transcendence degree of the function field $\mathbb{C}(X)$ over $\mathbb{C}$ and $f$ is dominant if and only if the ring map $\psi: \mathbb{C}[Y]\to \mathbb{C}[X]$ is an inclusion of rings. Since $\psi$ is an inclusion of rings we obtain that $\psi$ induces an inclusion of fields $\mathbb{C}(Y)\to \mathbb{C}(X)$. Therefore we have 
\[
\operatorname{tr.d}_{\mathbb{C}}(\mathbb{C}(X))\ge \operatorname{tr.d}_{\mathbb{C}}(\mathbb{C}(Y)).
\]
\end{proof}

An algebraic variety $X\subseteq\mathbb C^n$ is \textit{smooth} if the tangent space $T_{\mathbf x}(X)$ has constant dimension (i.e., $\operatorname{dim}(X))$ for every $\mathbf x\in X$. 
\begin{proposition}\label{prop:differential}
Let $f : X \rightarrow Y$ be a morphism between two smooth algebraic varieties with $\operatorname{dim}(X)\geq \operatorname{dim}(Y)$.
If there exists a point $\mathbf x\in X$ such that the rank of the differential of $f$ at $\mathbf x$ is equal to $\operatorname{dim}(Y)$, then the morphism $f$ is dominant. 
\end{proposition}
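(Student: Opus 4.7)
The plan is to show that the Zariski closure $Z := \overline{f(X)} \subseteq Y$ coincides with $Y$, by relocating the hypothesis at $\mathbf{x}$ to a smooth point of $Z$ where a tangent-space count becomes decisive. First I would reduce to the case of irreducible $X$ and $Y$: since $X$ is smooth it decomposes as a disjoint union of its irreducible components, so I replace $X$ by the component containing $\mathbf{x}$, and similarly replace $Y$ by the unique irreducible component of $Y$ containing $f(X)$. The goal then becomes $\operatorname{dim}(Z) = \operatorname{dim}(Y)$, since $Z \subseteq Y$ with both irreducible and of equal dimension forces $Z = Y$.

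Next I would consider the Zariski-open subset
\[
U := \{\mathbf{x}' \in X : \operatorname{rank}(df_{\mathbf{x}'}) = \operatorname{dim}(Y)\}.
\]
Since $\operatorname{rank}(df_{\mathbf{x}'}) \geq k$ is a Zariski-open condition (it is the nonvanishing of some $k\times k$ minor of the Jacobian) and $\operatorname{dim}(Y)$ is the maximum possible value of $\operatorname{rank}(df_{\mathbf{x}'})$, the set $U$ is open in $X$. It is non-empty by the hypothesis, since it contains $\mathbf{x}$, and so, by irreducibility of $X$, it is Zariski dense. By continuity of $f$, the image $f(U)$ is therefore Zariski dense in $f(X)$, and hence dense in $Z$.

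The crucial step is to choose a point in $U$ whose image is a smooth point of $Z$. The smooth locus $Z_{\mathrm{sm}}$ is a non-empty Zariski-open subset of $Z$, so its intersection with the dense subset $f(U)$ is non-empty; pick any $\mathbf{x}_0 \in U$ with $f(\mathbf{x}_0) \in Z_{\mathrm{sm}}$. The differential $df_{\mathbf{x}_0}: T_{\mathbf{x}_0}X \to T_{f(\mathbf{x}_0)}Y$ factors through the inclusion $T_{f(\mathbf{x}_0)}Z \hookrightarrow T_{f(\mathbf{x}_0)}Y$, and therefore
\[
\operatorname{dim}(Y) = \operatorname{rank}(df_{\mathbf{x}_0}) \leq \operatorname{dim}\bigl(T_{f(\mathbf{x}_0)}Z\bigr) = \operatorname{dim}(Z),
\]
where the last equality uses smoothness of $Z$ at $f(\mathbf{x}_0)$. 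Combined with the obvious inequality $\operatorname{dim}(Z) \leq \operatorname{dim}(Y)$, this forces $\operatorname{dim}(Z) = \operatorname{dim}(Y)$, hence $Z = Y$, i.e., $f$ is dominant.

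The main obstacle is the singular-point issue: at the given point $\mathbf{x}$ itself, $f(\mathbf{x})$ could be a singular point of $Z$, where $\operatorname{dim}(T_{f(\mathbf{x})}Z)$ can strictly exceed $\operatorname{dim}(Z)$, so the tangent-space information at $\mathbf{x}$ alone does not pin down $\operatorname{dim}(Z)$. The openness of the maximal-rank locus $U$, together with the density of $Z_{\mathrm{sm}}$ in $Z$, is precisely the mechanism that lets me transfer the rank hypothesis from $\mathbf{x}$ to the better-behaved point $\mathbf{x}_0$ and sidestep this obstruction.
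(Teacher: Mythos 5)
Your proof is correct and, in outline, follows the same strategy as the paper---controlling $\operatorname{dim}\overline{f(X)}$ by factoring the differential through the inclusion of tangent spaces---but it executes that strategy more carefully and, in fact, patches an implicit gap in the paper's own argument. The paper proceeds by contradiction: assuming $\overline{f(X)}$ is a proper subvariety of $Y$, it asserts $\operatorname{rank}(d_{\mathbf{x}}f)\leq \operatorname{dim} T_{f(\mathbf{x})}\overline{f(X)} < \operatorname{dim}(Y)$. The final strict inequality is justified only when $f(\mathbf{x})$ is a \emph{smooth} point of $\overline{f(X)}$; at a singular point the Zariski tangent space of a lower-dimensional subvariety can still have dimension $\operatorname{dim}(Y)$ (e.g.\ the node of a nodal cubic inside a smooth surface has a two-dimensional tangent space). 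Your argument avoids exactly this pitfall: the maximal-rank locus $U$ is open and nonempty, hence dense in irreducible $X$; so $f(U)$ is dense in $Z=\overline{f(X)}$ and therefore meets the open dense smooth locus $Z_{\mathrm{sm}}$, and at such a point the tangent-space count $\operatorname{dim} T_{f(\mathbf{x}_0)}Z=\operatorname{dim} Z$ is legitimate. (The paper's version can also be rescued by invoking generic smoothness over $\mathbb{C}$, but your route is self-contained.) You also make explicit the reduction to $Y$ irreducible, which is needed to pass from $\operatorname{dim} Z=\operatorname{dim} Y$ to $Z=Y$ and which the paper leaves tacit.
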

\begin{proof}
If $f$ is not dominant then $\overline{f(X)}$ is a proper subvariety of $Y$. Hence it factors as 
\[
X\xrightarrow{g} f(X)\xrightarrow{i} Y,
\]
where $g$ is defined by $g(\mathbf x)=f(\mathbf x)$ and $i$ is the inclusion of $\overline{f(X)}$ into $Y$. Then the differential $d_{\mathbf x}f$  factors as 
\[
T_{\mathbf x}X \xrightarrow{d_{\mathbf x}g} T_{f(\mathbf x)} \overline{f(X)} \xrightarrow{d_{f(\mathbf x)}i}T_{f(\mathbf x)}Y,
\]
where $T_{\mathbf x}X$ is the tangent space of the variety $X$ at the point $\mathbf x$.
Since $\overline{f(X)}$ is a proper subvariety of $Y$, it has strictly smaller dimension than $\operatorname{dim}(Y)$, which implies that $\operatorname{rank}(d_{\mathbf x} g)$ is at most $\operatorname{dim} T_{f(x)}\overline{f(X)} < \operatorname{dim}(Y)$. Therefore, we get a contradiction to the assumption that the rank of $d_{\mathbf x}f$ is $\operatorname{dim}(Y)$.
\end{proof}

For any positive integer $d>0$, the roots of the univariate polynomial equation
\[
t^d+p_{d-1}t^{d-1}+\dots+p_1t+p_0=0
\]
depends continuously on the coefficient vector $\mathbf p:=(p_{d-1},\dots,p_0)^\mathsf{T}$ \cite{s02}. If we collect the $d$ trajectories of the roots, we can define a multiset-valued map $q :\mathbb C^d\rightarrow\mathbb C^d/\mathfrak{S}(d)$ by
\begin{equation}\label{map:root}
q(\mathbf w):= \{ \text{roots (with mutliplicities) of  }t^d+w_1t^{d-1}+\dots+w_d=0\}.
\end{equation}

\begin{definition}\label{prop:dense-eig}
Let 
$p_i(\mathbf y)\in\mathbb C[\mathbf y]$ be a polynomial for all $i=0,\dots,d-1$ with $\mathbf y=(y_1,\dots,y_k)^\mathsf{T}\in\mathbb C^k$, and $\mathbf p: \mathbb C^k\rightarrow\mathbb C^d$ be the mapping defined by:
\[
\mathbf p(\mathbf y):=(p_{d-1}(\mathbf y),\dots,p_0(\mathbf y))^\mathsf{T}.
\]
The mapping $q\circ\mathbf p : \mathbb C^k\rightarrow\mathbb C^d/\mathfrak{S}(d)$ is called dominant, if $\operatorname{image}(q\circ\mathbf p)$ contains a Zariski open dense subset of $\mathbb C^d/\mathfrak{S}(d)$.
\end{definition}
Definition~\ref{prop:dense-eig} is an extension of dominant morphisms, since $q\circ\mathbf p$ is not a morphism.

\begin{lemma}\label{lemma:roots}
For any positive integer $d>0$,
let $\mathbf p: \mathbb C^k\rightarrow\mathbb C^d$ be a polynomial mapping as in Definition~\ref{prop:dense-eig}. Then, the composite mapping $q\circ\mathbf p : \mathbb C^k\rightarrow\mathbb C^d/\mathfrak{S}(d)$ is surjective (respectively, dominant), i.e., 
\[
\operatorname{image}(q\circ \mathbf p) =\mathbb C^d/\mathfrak{S}(d)  (\text{respectively, }
\operatorname{image}(q\circ \mathbf p)\ \text{contains an open dense subset of }\mathbb C^d/\mathfrak{S}(d))
\]
if and only if the mapping $\mathbf p$ is a surjective (respectively, dominant) morphism, i.e., 
\[
\operatorname{image}(\mathbf p)=\mathbb C^d (\text{respectively, }
\overline{\operatorname{image}(\mathbf p)}=\mathbb C^d).
\]
\end{lemma}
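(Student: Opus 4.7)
The plan is to reduce the lemma to the classical fact that the algebraic variety $\mathbb C^d/\mathfrak{S}(d)$ is isomorphic to the affine space $\mathbb C^d$ in such a way that $q$ itself becomes a linear automorphism. Once this identification is in place, the claimed equivalences are formal.

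First, I would invoke the fundamental theorem of symmetric polynomials: the invariant ring $\mathbb C[r_1,\dots,r_d]^{\mathfrak{S}(d)}$ is freely generated by the elementary symmetric polynomials $e_1,\dots,e_d$. Consequently, the affine quotient $\mathbb C^d/\mathfrak{S}(d)=\operatorname{Spec}\mathbb C[r_1,\dots,r_d]^{\mathfrak{S}(d)}$ is isomorphic, as an algebraic variety, to $\mathbb C^d$ via
\[
\varepsilon : \mathbb C^d/\mathfrak{S}(d)\longrightarrow \mathbb C^d,\qquad [(r_1,\dots,r_d)]\longmapsto (e_1(r),\dots,e_d(r)),
\]
and this is exactly the structure that supplies the Zariski topology on $\mathbb C^d/\mathfrak{S}(d)$ used in the statement.

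Second, I would apply Vieta's formulas. If $\mathbf w=(w_1,\dots,w_d)\in\mathbb C^d$ corresponds to a monic polynomial with root multiset $\{r_1,\dots,r_d\}=q(\mathbf w)$, then $w_i=(-1)^i e_i(r_1,\dots,r_d)$ for each $i$. Hence $\varepsilon\circ q:\mathbb C^d\to\mathbb C^d$ is the linear automorphism $(w_1,\dots,w_d)\mapsto (-w_1,w_2,-w_3,\dots,(-1)^d w_d)$. In particular, $q$ is itself a bijective isomorphism of algebraic varieties, even though its codomain was presented only as a multiset space.

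Third, I would push the equivalences through this isomorphism. For any subset $A\subseteq\mathbb C^k$, the image $q(\mathbf p(A))$ exhausts $\mathbb C^d/\mathfrak{S}(d)$ (respectively, contains a Zariski open dense subset of it) if and only if $\varepsilon(q(\mathbf p(A)))$ exhausts (respectively, is Zariski dense in) $\mathbb C^d$. Because $\varepsilon\circ q$ is a linear automorphism of $\mathbb C^d$, this is equivalent to $\mathbf p(A)$ being all of $\mathbb C^d$ (respectively, Zariski dense in $\mathbb C^d$), which is precisely surjectivity (respectively, dominance) of the polynomial morphism $\mathbf p$. There is no substantive obstacle in this argument; its entire content is the identification of the symmetric quotient with affine space via elementary symmetric polynomials, together with Vieta's formulas translating coefficients into (signed) values of those polynomials on the roots.
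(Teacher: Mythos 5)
Your proof is correct, and it takes a cleaner route than the paper's. The paper's proof does define essentially the same map that you call $\varepsilon\circ q$ under the name $g$ (the multiset of roots is sent to the vector of coefficients via Vieta), and it observes that $g$ is a morphism and that $g$ is the set-theoretic inverse of $q$; from there it disposes of the surjectivity equivalence by pure bijectivity. However, for the dominance equivalence the paper does \emph{not} use the fact that $q$ is a morphism --- indeed, the paper remarks just after Definition~\ref{prop:dense-eig} that ``$q\circ\mathbf p$ is not a morphism'' --- and instead runs a somewhat roundabout argument through the Euclidean topology: it passes to Euclidean open balls, uses continuity of $q$ and $g$, and then compares Euclidean closure with Zariski closure to get the dominance of $\mathbf p$. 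Your observation that the fundamental theorem of symmetric polynomials identifies $\mathbb C^d/\mathfrak{S}(d)$ with $\operatorname{Spec}\mathbb C[e_1,\dots,e_d]\cong\mathbb C^d$ and that Vieta's formulas exhibit $\varepsilon\circ q$ as a diagonal sign-change automorphism of $\mathbb C^d$ shows that $q$ is in fact an isomorphism of affine varieties (not merely a continuous bijection), which makes the whole lemma a triviality and renders the Euclidean detour unnecessary. It also implicitly corrects the paper's remark: once $\mathbb C^d/\mathfrak{S}(d)$ is equipped with the affine variety structure needed to even speak of Zariski-open subsets, $q$ and hence $q\circ\mathbf p$ \emph{are} morphisms. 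In short, your argument is a legitimate, tighter substitute for the published proof; what it buys is conceptual clarity and the elimination of the Euclidean/Zariski comparison, while what the paper's version offers is only that it avoids naming the coordinate ring of the quotient explicitly.
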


\begin{proof}
Note that $q : \mathbb C^d\rightarrow\mathbb C^d/\mathfrak{S}(d)$ is bijective. Then, $q\circ\mathbf p$ is surjective if and only if $\mathbf p$ is surjective.

We consider the map $g : \mathbb C^{d}/\mathfrak{S}(d)\to \mathbb{C}^{d}$ defined by sending a multiset $\{\lambda_1,\dots, \lambda_{d}\}$ to the vector formed by coefficients (except the leading term) of the polynomial $(t-\lambda_1)\cdots (t-\lambda_{d})$ in increasing codegree order. Then $g$ is a morphism. It is easy to see that $g$ is the inverse of the map $q :  \mathbb{C}^{d}\to\mathbb C^{d}/\mathfrak{S}(d)$. 

If $q\circ\mathbf p$ is dominant, then there is $V\subseteq \operatorname{image}(q\circ\mathbf p)$ such that $V$ is an open dense subset of $\mathbb C^{d}/\mathfrak{S}(d)$. $V$ is also an Euclidean open subset. 
Since $q$ is in addition continuous, $q^{-1}(V)=g(V)\subseteq \mathbf p(\mathbb C^k)$
is an Euclidean open subset. If $q^{-1}(V)$ is not dense, then there is small Euclidean open ball $\hat V\subset\mathbb C^d$ such that $q^{-1}(V)\cap \hat V=\emptyset$. Since $g$ is continuous, $g^{-1}(\hat V)$ is an Euclidean open set in $\mathbb C^{d}/\mathfrak{S}(d)$ (whose Euclidean topology is the induced one from $\mathbb C^d$). We must have $g^{-1}(\hat V)\cap V=\emptyset$, since $g$ is bijective.  
Thus, we obtain a contradiction to the choice of $V$. Therefore, $\mathbf p(\mathbb C^k)$ should contains an Euclidean open dense subset of $\mathbb C^d$. On the other hand, it is also true that the Euclidean closure of $\mathbf p(\mathbb C^k)$ is contained in the Zariski closure of $\mathbf p(\mathbb C^k)$. Thus, $\overline{\mathbf p(\mathbb C^k)}=\mathbb C^d$, and hence $\mathbf p$ is a dominant morphism. 

Suppose that $\mathbf p : \mathbb C^k\to\mathbb C^d$ is a dominant morphism. It follows from Proposition~\ref{prop:dense} that $\mathbf p(\mathbb C^k)$ contains an open dense subset $U$ of $\mathbb C^d$. Since $g$ is a morphism, $g^{-1}(U)$ is an open dense subset of $\mathbb C^d/\mathfrak{S}(d)$. Therefore, $g^{-1}(U)=q(U)\subseteq q(\mathbf p(\mathbb  C^k))\subseteq \mathbb C^d/\mathfrak{S}(d)$ is an open dense subset.  Thus, $q\circ\mathbf p$ is dominant by Definition~\ref{prop:dense-eig}. 
\end{proof}

\subsection{Necessary conditions}\label{sec:necessary}
We can expand out the characteristic polynomial $\chi(\lambda)$ of a tensor $\mathcal T\in\mathbb {TS}(\mathbb C^n,m+1)$ as
\[
\chi(\lambda)=\lambda^{nm^{n-1}}+c_{nm^{n-1}-1}(\mathcal T)\lambda^{nm^{n-1}-1}+\dots+c_1(\mathcal T)\lambda+c_0(\mathcal T).
\]
According to \cite{hhlq13}, each $c_i(\mathcal T)\in\mathbb C[\mathcal T]$ is a homogeneous polynomial in the variables $t_{ii_1\dots i_m}$'s of degree $nm^{n-1}-i$ for $i=0,\dots,nm^{n-1}-1$. We define the \textit{coefficient map} $\mathbf c : \mathbb{TS}(\mathbb C^n,m+1)\rightarrow \mathbb C^{nm^{n-1}}$ as
\begin{equation}\label{coefficient-map}
\mathbf c(\mathcal T):=(c_{nm^{n-1}-1}(\mathcal T),\dots,c_0(\mathcal T))^\mathsf{T}\ \text{for all }\mathcal T\in \mathbb{TS}(\mathbb C^n,m+1).
\end{equation}
It is easy to see that $\mathbf c$ is a morphism between two smooth varieties. It is also easy to see that $\phi=q\circ\mathbf c$ (cf.\ Definition~\ref{prop:dense-eig}).
These, together with Lemma~\ref{lemma:roots}, imply the next proposition. 

\begin{proposition}[Equivalent Relation]\label{prop:tensor}
For any positive integers $m$ and $n$, the multiset-valued eigenvalue map 
$\phi: \mathbb {TS}(\mathbb C^n,m+1) \to \mathbb{C}^{nm^{n-1}}/\mathfrak{S}_{nm^{n-1}}$
is surjective (respectively, dominant)  if and only if the coefficient map $\mathbf c : \mathbb {TS}(\mathbb C^n,m+1) \to\mathbb C^{nm^{n-1}}$ is a surjective (respectively, dominant) morphism. 
\end{proposition}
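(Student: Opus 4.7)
The plan is to recognize that this proposition is essentially a direct corollary of Lemma~\ref{lemma:roots}, once we verify the bookkeeping that places us in the hypothesis of that lemma. So my approach would be to (i) identify the ambient space $\mathbb{TS}(\mathbb{C}^n,m+1)$ with a complex affine space $\mathbb{C}^k$ where $k = n\binom{n+m-1}{m}$ (or, more intrinsically, recall that it is smooth and irreducible as an affine space), (ii) check that $\mathbf{c}$ is a polynomial map in the chosen coordinates and is therefore a morphism, and (iii) identify $\phi$ with the composite $q \circ \mathbf{c}$ in the sense of Definition~\ref{prop:dense-eig}.

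For step (ii), the reference to \cite{hhlq13} already tells us that each $c_i(\mathcal{T})$ is a homogeneous polynomial of degree $nm^{n-1}-i$ in the entries $t_{ii_1\dots i_m}$. So in the standard basis of $\mathbb{TS}(\mathbb{C}^n,m+1)$, the map $\mathbf c$ defined in \eqref{coefficient-map} has polynomial components; this is exactly the definition of a morphism between affine spaces.

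For step (iii), by the very definition of the characteristic polynomial, $\chi(\lambda)$ factors over $\mathbb{C}$ and the multiset of its roots (with algebraic multiplicities) is $\sigma(\mathcal{T}) = \phi(\mathcal T)$. On the other hand, $q$ sends a coefficient vector $\mathbf w$ to the multiset of roots of $t^d + w_1 t^{d-1} + \dots + w_d$. Taking $d = nm^{n-1}$ and $\mathbf{w} = \mathbf{c}(\mathcal T)$, we read off $\phi(\mathcal T) = (q \circ \mathbf c)(\mathcal T)$ for every $\mathcal T$.

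Having set this up, I would invoke Lemma~\ref{lemma:roots} with $k = \dim \mathbb{TS}(\mathbb{C}^n,m+1)$ and $\mathbf p = \mathbf c$. The lemma asserts exactly that $q \circ \mathbf p$ is surjective (respectively, dominant in the sense of Definition~\ref{prop:dense-eig}) if and only if $\mathbf p$ is surjective (respectively, dominant as a morphism), which is the desired equivalence. There is no real obstacle here; the substantive content, namely reconciling the Zariski topology on $\mathbb{C}^d/\mathfrak{S}(d)$ with the surjective/dominant behavior of the auxiliary map $q$, was already handled inside the proof of Lemma~\ref{lemma:roots} via the morphism $g$ going from coefficients back to multisets. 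The only thing to be careful about in writing the proof is to say explicitly that Lemma~\ref{lemma:roots} applies because $\mathbf c$ is a polynomial map of the required form, rather than to re-do the topological argument.
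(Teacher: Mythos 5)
Your proposal is correct and matches the paper's argument exactly: the paper likewise observes that $\mathbf c$ is a polynomial morphism, that $\phi = q\circ\mathbf c$ in the sense of Definition~\ref{prop:dense-eig}, and then invokes Lemma~\ref{lemma:roots}. No gaps or discrepancies.
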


\begin{lemma}\label{lemma:size}
For all positive integers $m,n\geq 2$, it holds that
\begin{equation}\label{lemma:size-inequality}
{n+m-1\choose m}<m^{n-1},
\end{equation}
unless $n=2$, or
\[
(n,m)=(3,2),\ (4,2),\ (3,3).  
\]
\end{lemma}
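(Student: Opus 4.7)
The plan is to verify that the listed exceptional cases actually fail the inequality and then prove the inequality in the remaining regime by induction on $n$, with a short case split on $m$ to handle the different starting points.

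First I would sanity-check the exceptions by direct substitution: $\binom{m+1}{m}=m+1>m$ for $n=2$, and $\binom{4}{2}=6>4$, $\binom{5}{2}=10>8$, $\binom{5}{3}=10>9$ for $(n,m)=(3,2),(4,2),(3,3)$ respectively. This confirms the list of exceptions is necessary.

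For the proof of the inequality itself, the engine is the identity
\[
\binom{n+m}{m}=\frac{n+m}{n}\binom{n+m-1}{m},
\]
so if $\binom{n+m-1}{m}<m^{n-1}$, then $\binom{n+m}{m}<\frac{n+m}{n}m^{n-1}$. The inductive step closes provided $\frac{n+m}{n}\le m$, equivalently $m(n-1)\ge n$, i.e.\ $m\ge\frac{n}{n-1}$. For $n\ge 3$ we have $\frac{n}{n-1}\le\frac{3}{2}$, so any $m\ge 2$ suffices. Hence once a base case is established at some $n_0\ge 3$, induction on $n$ propagates the inequality to all larger $n$, for any fixed $m\ge 2$.

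It remains to supply base cases for each value of $m$ that starts after the exceptional cases end. For $m=2$, the first non-exceptional dimension is $n=5$, and one checks $\binom{6}{2}=15<16=2^4$. For $m=3$, the first non-exceptional dimension is $n=4$, and $\binom{6}{3}=20<27=3^3$. For $m\ge 4$, the first non-exceptional dimension is $n=3$, and here I would verify $\binom{m+2}{2}=\tfrac{(m+1)(m+2)}{2}<m^2$ for all $m\ge 4$, which is equivalent to $m^2-3m-2>0$ and follows immediately since $m(m-3)\ge 4>2$ when $m\ge 4$. Combining these three base cases with the uniform inductive step above covers every non-exceptional $(n,m)$.

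The only mild obstacle is bookkeeping the three separate base cases, since $n_0$ depends on $m$; but because the inductive ratio $\tfrac{n+m}{n}\le m$ is crude enough to be valid for all $n\ge 3$, $m\ge 2$ in one blow, no finer analysis or sharper inequality is needed.
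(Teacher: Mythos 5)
Your proof is correct and uses the same key identity $\binom{n+m}{m}=\frac{n+m}{n}\binom{n+m-1}{m}$ for the induction on $n$ as the paper does. The one difference is that the paper also establishes monotonicity in $m$ (from $\frac{\binom{n+m-1}{m}}{m^{n-1}}=\frac{(1+\frac{n-1}{m})\cdots(1+\frac{1}{m})}{(n-1)!}$, visibly decreasing in $m$) so that all base checks reduce to finitely many corners, whereas you dispatch the infinite family $n=3$, $m\geq 4$ directly with the quadratic bound $m^2-3m-2>0$; both close the argument equally well.
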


\begin{proof}
First note that, for fixed $m\geq 2$, if \eqref{lemma:size-inequality} hods for some $n\geq 2$, then it also holds for
$n+1$, since
\[
{n+m\choose m}=\frac{n+m}{n}{n+m-1\choose m}<m{n+m-1\choose m}.
\]
Second, note that for fixed $n\geq 2$, if \eqref{lemma:size-inequality} hods for some $m\geq 2$, then it also holds for
$m+1$, since
\[
\frac{{n+m-1\choose m}}{m^{n-1}}=\frac{(1+\frac{n-1}{m})\cdots(1+\frac{1}{m})}{(n-1)!}.
\]
Last, it is then a direct calculation to see that the listed cases are the only exceptions to the inequality~\eqref{lemma:size-inequality}. 
\end{proof}

The next proposition establishes the necessary condition under which the multiset-valued eigenvalue map is dominant. It says that in most situations, the eigenvalue map $\phi$ fails to be dominant. 
\begin{proposition}[Necessary condition]\label{prop:necessary}
Let integers $m,n\geq 2$. 
A necessary condition for the map $\phi : \mathbb{TS}(\mathbb C^n,m+1)\rightarrow\mathbb C^{nm^{n-1}}/\mathfrak{S}(n)$ being dominant 
is that either 
$n=2$, or 
\[
(n,m)=(3,2),\ (4,2),\ (3,3).  
\]
\end{proposition}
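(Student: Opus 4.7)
The plan is to reduce dominance of $\phi$ to a dimension inequality via the already-established machinery, and then apply the combinatorial bound of Lemma~\ref{lemma:size}. First, by Proposition~\ref{prop:tensor}, the multiset-valued map $\phi : \mathbb{TS}(\mathbb C^n,m+1)\to\mathbb C^{nm^{n-1}}/\mathfrak{S}(nm^{n-1})$ is dominant if and only if the coefficient morphism $\mathbf c : \mathbb{TS}(\mathbb C^n,m+1)\to\mathbb C^{nm^{n-1}}$ is dominant. Since $\mathbf c$ is a polynomial map between two linear spaces, it is a morphism between irreducible smooth algebraic varieties, so Proposition~\ref{prop:necessary-general} applies.

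Next, I would compute the source dimension. Because $\mathbb{TS}(\mathbb C^n,m+1)=\mathbb C^n\otimes\operatorname{S}^m(\mathbb C^n)$ and $\dim\operatorname{S}^m(\mathbb C^n)=\binom{n+m-1}{m}$, we have
\[
\dim\mathbb{TS}(\mathbb C^n,m+1)=n\binom{n+m-1}{m}.
\]
The target dimension is simply $nm^{n-1}$. Thus the necessary dimension condition supplied by Proposition~\ref{prop:necessary-general} becomes
\[
n\binom{n+m-1}{m}\ \ge\ nm^{n-1},\qquad\text{equivalently}\qquad \binom{n+m-1}{m}\ \ge\ m^{n-1}.
\]

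Finally, I invoke Lemma~\ref{lemma:size}, which asserts that the strict reverse inequality $\binom{n+m-1}{m}<m^{n-1}$ holds for all $m,n\geq 2$ except when $n=2$ or $(n,m)\in\{(3,2),(4,2),(3,3)\}$. Therefore, outside these listed cases the necessary dimension inequality fails, so $\mathbf c$ cannot be dominant, and hence neither can $\phi$. This is precisely the statement of the proposition. There is no serious obstacle: every ingredient has been prepared earlier (the equivalence between $\phi$ and $\mathbf c$, the dimension-monotonicity of dominant morphisms, the dimension count for $\mathbb C^n\otimes\operatorname{S}^m(\mathbb C^n)$, and the combinatorial inequality), so the argument is essentially an assembly of these facts.
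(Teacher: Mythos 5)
Your proof is correct and follows essentially the same route as the paper: use Proposition~\ref{prop:tensor} to reduce to the coefficient morphism $\mathbf c$, apply the dimension inequality of Proposition~\ref{prop:necessary-general} with $\dim\mathbb{TS}(\mathbb C^n,m+1)=n\binom{n+m-1}{m}$, and conclude via the combinatorial bound of Lemma~\ref{lemma:size}. No discrepancies.
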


\begin{proof}
Note that
the dimension of the tensor space 
$\mathbb{TS}(\mathbb{C}^n,m+1)$ is
\[
n{n+m-1\choose m}.
\]
The result then follows from Propositions~\ref{prop:necessary-general} and \ref{prop:tensor}, and Lemma~\ref{lemma:size}. 
\end{proof}

\section{Tensors with dimension $n=2$}\label{sec:two}

\subsection{Basics}
In this section, we consider tensors in $\mathbb {TS}(\mathbb C^2,m+1)$. The multiset-valued eigenvalue map is therefore 
\[
\phi : \mathbb{TS}(\mathbb C^2,m+1)\rightarrow\mathbb C^{2m}/\mathfrak{S}(2m).
\]
The system of eigenvalue equations of a tensor $\mathcal T=(t_{i_0\dots i_m})$ is (cf.\ \eqref{eigenvalue-equation})
\[
\begin{cases}a_0x^m+a_1x^{m-1}y+\dots+a_my^m=\lambda x^m,\\ b_0x^m+\dots+b_{m-1}xy^{m-1}+b_my^m=\lambda y^m,\end{cases}
\]
where we parameterized $\mathcal T$ as
\[
a_0:=t_{111\dots 1},\ a_1:=t_{121\dots 1}/m,\dots, a_m=t_{122\dots2},\ b_0=t_{211\dots1},\dots,b_{m-1}=t_{212\dots 2}/m,\ b_m=t_{222\dots 2}.
\]

It follows from the Sylvester formula for the resultant of two homogeneous polynomials in two variables (cf.\ \cite{s02,gkz94}) that 
the characteristic polynomial is $\operatorname{det}(M-\lambda I)$ with the identity matrix $I\in\mathbb C^{2m\times 2m}$ and
the matrix $M\in\mathbb C^{2m\times 2m}$
\begin{equation}\label{sylvester-matrix}
M=\begin{bmatrix}a_0&a_1&a_2&\dots&a_m&0&0&\dots\\ 0& a_0&a_1&a_2&\dots&a_m&0&\dots\\
0&0& a_0&a_1&a_2&\dots&a_m&\dots \\ &&&\dots\\ 0&\dots &0&a_0&a_1&a_2&\dots& a_m\\
b_0&b_1&b_2&\dots&b_m&0&0&\dots\\ 0& b_0&b_1&b_2&\dots&b_m&0&\dots\\
0&0& b_0&b_1&b_2&\dots&b_m&\dots \\ &&&\dots\\ 0&\dots &0&b_0&b_1&b_2&\dots& b_m
\end{bmatrix}.
\end{equation}

For all $k=1,\dots,2m$, denote by $M_k:=\{A : A\ \text{is a }k\times k\ \text{principal submatrix of }M\}$ the set of all $k\times k$ principal submatrices of $M$. 
It is known that
\[
\operatorname{det}(M-\lambda I)=\sum_{k=0}^{2m}(-1)^k\bigg(\sum_{A\in M_{2m-k}}\operatorname{det}(A)\bigg)\lambda^k,
\]
where $M_0:=\emptyset$ by convenience, and the summation over an empty set is defined as $1$. 
Denote by
\begin{equation}\label{minor}
c_k(\mathcal T):=(-1)^k\sum_{A\in M_{2m-k}}\operatorname{det}(A), \ \text{for all }k=0,\dots,2m.
\end{equation}
We have (cf.\ \cite{hhlq13})
\[
c_0(\mathcal T)=\operatorname{det}(\mathcal T)=\operatorname{det}(M),\ c_{2m-1}(\mathcal T)=-m(a_0+b_m),\ \text{and }c_{2m}(\mathcal T)=1.
\]
It is easy to see that each $c_i\in\mathbb C[\mathcal T]$ is a homogeneous polynomial of degree $2m-i$ for $i=0,\dots,2m$, and
$c_{2m-1}(\mathcal T),\dots,c_0(\mathcal T)$ are the components of the coefficient map $\mathbf c$ (cf.\ \eqref{coefficient-map}). 
Denote by $H\in\mathbb C^{2m\times(2m+2)}$ the Jacobian matrix of the coefficient map $\mathbf c:=(c_{2m-1},\dots,c_0)^\mathsf{T}:\mathbb C^{2m+2}\rightarrow\mathbb C^{2m}$ with respect to variables $a_0,\dots,a_m,b_0,\dots,b_m$:
\[
h_{ij}:=\begin{cases}\frac{\partial c_{2m-i}}{\partial a_{j-1}}& \text{if }j\leq m+1,\\ \frac{\partial c_{2m-i}}{\partial b_{j-m-2}}& \text{otherwise}.\end{cases}
\]
Denote by the submatrix $H_{:,1:2m}$ of $H$ as $K$.  
Here we use the Matlab notation for submatrices: $A_{a:b,c:d}$ means the submatrix of $A\in\mathbb C^{p\times q}$ formed by the row index set $\{a,a+1,\dots,b\}$ and the column index set $\{c,c+1,\dots,d\}$, $A_{:,c:d}$ means the corresponding row index set being the entire  $\{1,\dots,p\}$, etc. So,  
$K$ is a $2m\times 2m$ matrix with entries in $\mathbb C[a_0,\dots,a_m,b_0,\dots,b_m]$. Moreover, it follows that the monomial of every term in each entry of the $i$th row of $K$ is of the same degree $i-1$ with respect to $a_0,\dots,a_m,b_0,\dots,b_m$. 

In order to show that the map $\phi$ is dominant for $\mathbb{TS}(\mathbb C^2,m+1)$, which is the same as the map $\mathbf c$ being dominant (cf.\ Proposition~\ref{prop:tensor}), 
our goal is to show that the matrix $H$ is of full rank for some tensor $\mathcal T$ (cf.\ Proposition~\ref{prop:differential}), which will be a consequence of the nonsingularity of 
$K$ at that tensor point. Actually, we will show a much more stronger result: the determinant of the matrix $K$ is a nonzero polynomial
in $\mathbb C[\mathcal T]$, which implies the nonsingularity generically. To achieve this, we only need to show that there is a term $\alpha a_1^{\frac{m(m-1)}{2}}a_m^{m-1}b_{m-1}^{\frac{m(m+1)}{2}+(m-1)^2}$ in the determinant $\operatorname{det}(K)$ for some nonzero scalar $\alpha$. 


To illustrate the proof of the general case we first work out the following example. 
\begin{example}
Let $m=2$. Then we have the Sylvester matrix 
\[
M=\left[
\begin{matrix}
a_0 & a_1 & a_2 & 0\\
0 & a_0 & a_1 & a_2\\
b_0 & b_1 & b_2 & 0\\
0 & b_0 & b_1 & b_2 
\end{matrix}
\right]
\]
and hence the coefficients of the characteristic polynomial of $M$ (the same as that for the tensor) are
\begin{align*}
c_4(M)&=1,\\
c_3(M)&=-2(a_0+b_2),\\
c_2(M)&=\operatorname{det}\left[
\begin{matrix}
a_0 & a_1\\
b_1 & b_2
\end{matrix}
\right]+ \textit{ principal $2\times 2$ minors which do not involve $b_1a_i$'s}, \\
c_1(M)&=-\operatorname{det} \left[
\begin{matrix}
a_0 & a_1 & a_2\\
b_1 & b_2 & 0\\
b_0 & b_1 & b_2
\end{matrix}
\right] - \textit{ principal $3\times 3$ minors which do not involve  $b_1^2a_i$'s},\\
c_0(M)& = \operatorname{det}(M), \textit{in which only one term can involve $b_0b_1$, that is, $a_1a_2b_0b_1$.}
\end{align*}
It is easy to compute $H$
\[
H=
\begin{bmatrix}
-2 & 0 & 0 & 0 & 0 & 2\\
* & -b_1 & * & * & \% & \% \\
\& &\&  & -b_1^2+ \& & \& & \% & \% \\
\$ & \# &\# &  -a_1a_2b_1+\# & \% & \%
\end{bmatrix},
\]
where $*$'s contain terms without the variable $b_1$, 
$ \&$'s contain terms with the degrees of $b_1$ being strictly smaller than $2$,
and $\#$ contains terms violating either (1) has the variable $b_1$  or (2) only has the variables $a_1$, $a_2$ and $b_1$. 

By definition the submatrix $K$ of $H$ is 
\[
K=
\begin{bmatrix}
-2 & 0 & 0 & 0  \\
* & -b_1 & * & *  \\
\& &\&  & -b_1^2+ \& & \&  \\
\$ & \# &\# &  -a_1a_2b_1+\#  
\end{bmatrix}.
\]
Thus, the only way to obtain $a_1a_2b_1^4$ in $\operatorname{det}(K)$ is by taking the diagonal entries of $K$. It is obvious that the coefficient of $a_1a_2b_1^4$ is nonzero. Note that the degree $4$ for the variable $b_1$ is the maximal possible.  
\end{example} 

\subsection{General cases}
Let us look at the diagonal elements of the submatrix $K_{1:m+1;1:m+1}$ of $K$. 
\begin{lemma}\label{lemma:diagonal}
For each $i=1,\dots,m+1$, this is a nonzero term of the monomial $b_{m-1}^{i-1}$ in the entry $K_{ii}$. 
Moreover, this is the unique entry in the $i$th row of $K$ containing a term of the monomial $b_{m-1}^{i-1}$. 
\end{lemma}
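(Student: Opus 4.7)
The plan is to trace every appearance of the pure monomial $b_{m-1}^{i-1}$ in row $i$ of $K$ back to a single term in the Leibniz expansion of an $i\times i$ principal minor of the Sylvester matrix $M$, and then to use a single multiset-sum identity to pin down the contributing configuration. The key structural input is that $b_{m-1}$ appears in $M$ precisely at the subdiagonal positions $(m+k,m+k-1)$ for $k=1,\ldots,m$, while $a_{i-1}$ appears at $(r,r+i-1)$ for $r=1,\ldots,m$.

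A pure $b_{m-1}^{i-1}$ term in $K_{ij}=\partial c_{2m-i}/\partial x$ (with $x=a_{j-1}$ for $j\leq m+1$ or $x=b_{j-m-2}$ for $m+2\leq j\leq 2m$) equals the coefficient of $x\cdot b_{m-1}^{i-1}$ in $c_{2m-i}=(-1)^i\sum_{|S|=i}\det(M_{S,S})$, and such a monomial can arise only from a principal subset $S$ of size $i$ together with a permutation $\sigma\colon S\to S$ whose product consists of one entry equal to $x$ at some position $(p,q)$ and $i-1$ entries equal to $b_{m-1}$ at positions $(m+r_\ell,m+r_\ell-1)$ with distinct $r_\ell\in\{1,\ldots,m\}$. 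Principality of $S$ forces the multiset identity
\[
\{p\}\cup\{m+r_1,\ldots,m+r_{i-1}\}=\{q\}\cup\{m+r_1-1,\ldots,m+r_{i-1}-1\},
\]
and summing the elements of both sides yields the relation $q-p=i-1$. If $x=a_{j-1}$ then $(p,q)=(p,p+j-1)$, so $j=i$; if $x=b_k$ with $0\leq k\leq m-2$ then $(p,q)=(m+r,r+k)$, so $k=m+i-1\geq m+1$, which is impossible. Thus $b_{m-1}^{i-1}$ appears as a pure monomial only in the diagonal entry $K_{ii}$.

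In the remaining diagonal case $x=a_{i-1}$, I would solve the multiset identity by iteratively extracting the smallest element: the $\mathrm{LHS}$-minimum is $p$, and since $q=p+i-1>p$ the $\mathrm{RHS}$-minimum must be $m+\min R-1$, which forces $p=m$ and $1\in R$; peeling off this common minimum reduces the identity to one of the same form on smaller data, and the recursion terminates at $R=\{1,2,\ldots,i-1\}$. Thus the only contributing configuration is $S=\{m,m+1,\ldots,m+i-1\}$ together with the cyclic permutation $\sigma$ sending $m\mapsto m+i-1$ and $m+\ell\mapsto m+\ell-1$ for $\ell=1,\ldots,i-1$. Combining the sign $(-1)^i$ from $c_{2m-i}$ with the sign $(-1)^{i-1}$ of this $i$-cycle yields the coefficient $-1$ for $b_{m-1}^{i-1}$ in $K_{ii}$, which is nonzero. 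The main bookkeeping effort lies in the inductive smallest-element reduction, but at each step the derived relation $q-p=i-1$ together with the constraint $R\subset\{1,\ldots,m\}$ leaves exactly one possibility, so the argument goes through cleanly.
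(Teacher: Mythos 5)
Your proof is correct, and it reaches the same conclusion as the paper's argument via a somewhat different and arguably cleaner route. Both proofs reduce the lemma to understanding which $i\times i$ principal minors of the Sylvester matrix $M$ can contribute the target monomial, using the fact that $b_{m-1}$ sits on the subdiagonal $(m+k,m+k-1)$ of the $b$-block. The paper then argues case by case: for $j>m+1$ it appeals to the lower-triangular structure of $M_{m+1:2m,m+1:2m}$ (every all-$b$ principal minor picks up a factor $b_m^i$, so no $b_{m-1}^{i-1}b_{j-m-2}$ term can appear), and for $j\le m+1$, $j\ne i$, it argues via Laplace expansion that the $a$-entry is forced into the $(1,i)$ corner of the submatrix, which forces $j=i$. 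Your approach replaces both of these with a single uniform device: the set identity between row and column indices of the contributing permutation, whose element-sum gives $q-p=i-1$ and immediately kills all $j\ne i$ at once (for $x=a_{j-1}$ it forces $j=i$; for $x=b_k$ with $k\le m-2$ it gives $q-p=k-m<0$, a contradiction). Your iterative smallest-element extraction to pin down the unique contributing pair $(S,\sigma)$ and hence the sign is also more systematic than the paper's verbal argument that ``this can only happen when $j=i$ and $T$ is a leading principal submatrix of $P$.'' Both approaches are valid; yours buys a unified treatment of the uniqueness cases and an explicit determination of the coefficient $-1$, while the paper's is shorter because it leans on structural facts about the Sylvester matrix (lower-triangularity of the $b$-block) rather than deriving everything from the index bookkeeping. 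One small hygienic point: your recursion tacitly uses that $p\le m$ (row of an $a$-entry) and $r_1\ge 1$ to conclude $p=m$, $r_1=1$; this is fine, but worth stating explicitly since after the first peel the ``$p$'' slot is played by $m+1$ and the constraint structure changes form slightly, so the induction should be phrased on the derived identity $\{m+\ell\}\cup(m+R')=\{m+i-1\}\cup(m+R'-1)$ rather than literally ``of the same form.''
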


\begin{proof}
Let us visualize the submatrix $M_{m-1:2m,m-1:2m}$ of $M$:
\[
P=\begin{bmatrix}a_0&a_1&a_2&a_3&\dots&a_m\\ b_{m-1}&b_m&0&0&\dots&0\\ &b_{m-1}&b_m& \\ & &\ddots&\ddots&\\
&&&b_{m-1}&b_m&0\\ &&&&b_{m-1}&b_m
\end{bmatrix}.
\]
The case when $i=1$ is trivial. Let $i>1$. 
It is easy to check that there is a term
\[
(-1)^{i-1}a_{i-1}b_{m-1}^{i-1}
\]
in the $i\times i$ leading principal minor of $P$ for all $i=2,\dots,m+1$. 
It is also easy to see that any other choice of $i\times i$ principal minor of $M$ cannot have a term whose monomial is $a_{i-1}b_{m-1}^{i-1}$ for $i>1$, since only $i\times i$ principal submatrices of $P$ can contain $i-1$ rows for the variable $b_{m-1}$ and one row for $a$'s, and only the minor we have seen can result in a nonzero term of the monomial $a_{i-1}b_{m-1}^{i-1}$. 
Henceforth, it follows from the formulae for the coefficients and the definition for the Jacobian matrix that a monomial $b_{m-1}^{i-1}$ occurs in the entry $K_{ii}$ for all $i=1,\dots,m+1$.

In the next, we show the uniqueness. By the homogeneity of the polynomials in each entry, the case $i=1$ is trivial. Actually, it follows from \cite{hhlq13} that $c_{2m-1}(\mathcal T)=-m(a_0+b_m)$, which implies $K_{1j}=0$ for $j=2,\dots,2m$. 

Let us fix $i>1$. First, each entry $K_{ij}$ cannot have a nonzero term of the monomial $b_{m-1}^{i-1}$ for $j>m+1$. Suppose on the contrary that it has, then $c_{2m-i}$ contains a nonzero term of the monomial $b_{j-m-2}b_{m-1}^{i-1}$. It follows from the structure of the matrix $M$ that this term comes from an $i\times i$ principal minor of the submatrix $M_{m+1:2m,m+1:2m}$:
\[
M_1:=\begin{bmatrix}b_m&0&0&\dots&0\\ b_{m-1}&b_m&0&\dots&0\\ b_{m-2}&b_{m-1}&b_m&\dots&0\\
\dots&\dots&\dots&\dots&\dots\\
b_1&b_2&b_3&\dots&b_m
\end{bmatrix}.
\]
However, this cannot happen, since the monomial of every term in any principal minor of this matrix contains the variable $b_m$. 

Second, we show that each entry $K_{ij}$ cannot have a nonzero term of the monomial $b_{m-1}^{i-1}$ for $j\neq i$ within $j\in\{1,\dots,m+1\}$.
Again, suppose on the contrary that it has, then $c_{2m-i}$ contains a nonzero term of the monomial $a_{j-1}b_{m-1}^{i-1}$. It comes from a principal minor of $M$. The corresponding principal submatrix is denoted by $T\in\mathbb C^{i\times i}$. By the hypothesis, we should have that $T$ is such a principal submatrix with whose $(i-1)\times (i-1)$ lasting principal submatrix comes from an $(i-1)\times (i-1)$ principal submatrix of $M_1$, since we should have $i-1$'s $b_{m-1}$. 
Note that each principal submatrix of $M_1$ is lower triangular with the last diagonal entry being $b_m$. Therefore, in order to get the monomial $a_{j-1}b_{m-1}^{i-1}$, we must have that the $(1,i)$th entry of $T$ being $a_{j-1}$, and there is $b_{m-1}$ in each $s$th row of $T$ for $s=2,\dots,i$ by Laplace's determinant formula (cf.\ \cite{hj85}). However, this can only happen when $j=i$ and $T$ being a leading principal submatrix of $P$. A contradiction is therefore arrived. 

In conclusion, $K_{ii}$ is the unique entry in the $i$th row of $K$ possessing a nonzero term of the monomial $b_{m-1}^{i-1}$.  
\end{proof}

Let us look at the antidiagonal elements of the submatrix $K_{m+2:2m;m+2:2m}$ of $K$. 
\begin{lemma}\label{lemma:antidiagonal}
For each $i=m+2,\dots,2m$, there is a nonzero term of the monomial $a_1^{i-m-1}b_{m-1}^{m-1}a_m$ in the $(i,3m-i+2)$th entry of $K$. 
Moreover, it is the unique entry with the maximal degree $m-1$ for $b_{m-1}$ among terms involving only $a_1,b_{m-1},a_m$ in the $(i,j)$th entry of $K$ for $j=2,\dots,2m$.
\end{lemma}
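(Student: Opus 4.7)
Set $\ell := 2m - i \in \{0,1,\ldots,m-2\}$. Since column $3m-i+2 = m+2+\ell$ of $K$ corresponds to the derivative $\partial/\partial b_\ell$, the entry $K_{i,\,3m-i+2}$ equals $\partial c_\ell/\partial b_\ell$, where $c_\ell = (-1)^\ell \sum_{A\in M_{2m-\ell}}\det(A)$ is (up to sign) a sum of $(2m-\ell)\times(2m-\ell)$ principal minors of the Sylvester matrix $M$ in \eqref{sylvester-matrix}. My plan is therefore to establish two things: (i) $c_\ell$ contains a nonzero multiple of the monomial $a_1^{m-\ell-1}b_{m-1}^{m-1}a_m b_\ell$; and (ii) no other single variable $v \in \{a_0,\ldots,a_m,b_0,\ldots,b_{m-2}\}$ allows $c_{2m-i}$ to contain a monomial whose derivative $\partial/\partial v$ produces a term $a_1^Pb_{m-1}^{m-1}a_m^R$ in some $K_{i,j}$ with $j\in\{2,\ldots,2m\}\setminus\{3m-i+2\}$.

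For the existence half I would consider the $(2m-\ell)\times(2m-\ell)$ principal submatrix $Q:=M_{\ell+1:2m,\,\ell+1:2m}$ and exhibit on $S=\{\ell+1,\ldots,2m\}$ the permutation $\pi$ with $\pi(k)=k+1$ for $\ell+1\le k\le m-1$, $\pi(m)=2m$, $\pi(m+1)=\ell+1$, and $\pi(m+k)=m+k-1$ for $2\le k\le m$. Direct inspection of $M$ then gives $\prod_{s\in S} M_{s,\pi(s)} = a_1^{m-\ell-1}\,a_m\,b_\ell\,b_{m-1}^{m-1}$, contributing the desired monomial to $\det(Q)$ (and hence to $c_\ell$) with sign $\operatorname{sgn}(\pi)$, uncancelled because the uniqueness step below shows $\pi$ is the only permutation of $S$ producing this monomial.

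For uniqueness I plan to run a column-by-column descent starting from $j=2m$. In any $i\times i$ principal minor contributing a monomial of the shape $a_1^P b_{m-1}^{m-1}a_m^R\cdot w$ (allowing $w$ to coincide with $a_1$ or $a_m$ and be absorbed), each column of the index set must be filled by an entry of $M$ drawn from $\{a_1,a_m,b_{m-1},w\}$. If $2m$ lies in the index set $S$, the only such entry in column $2m$ is $M_{m,2m}=a_m$, so $m \in S$ and $R \ge 1$; the alternative of filling column $2m$ with an extra variable $w$ fails at a subsequent column of the descent. Inductively, columns $2m-1,2m-2,\ldots,m+1$ will be forced to be filled by $b_{m-1}$ from rows $2m,2m-1,\ldots,m+2$, because replacing any of these by $a_m$ from row $m-s$ forces $R$ to grow beyond what the monomial's degree permits, and replacing by an extra variable $w=a_{m-s}$ or $w=b_j$ in those columns either reuses an occupied row or requires $j\ge m-1$, which is outside the admissible range $\{0,\ldots,m-2\}$. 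The unique $b$-row then still unassigned is $m+1$, and reading off the nonzero entries of row $m+1$ in the remaining columns of $S$ shows it can only contribute $b_\ell$ at column $\ell+1$, which pins down $w=b_\ell$ and $j=3m-i+2$. The surviving $a$-rows and columns then collapse to $\{\ell+1,\ldots,m\}$ and reproduce the permutation displayed above, so in particular $S = \{\ell+1,\ldots,2m\}$. The auxiliary case $2m \notin S$ is eliminated by the same method: after all $b_{m-1}$'s are absorbed into rows $m+1,\ldots,2m-1$, the remaining $a$-rows must form a strictly ascending chain $k, k+1, k+2,\ldots$ of length $|T|$ inside $\{1,\ldots,m-1\}$, which cannot be completed.

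The main obstacle will be the uniqueness step: the cases are parametrized by the choice of extra variable $w$ (a priori any of $a_0,a_2,\ldots,a_{m-1}$ or $b_0,\ldots,b_{m-2}$) and by the choice of principal submatrix and permutation, and each must be eliminated by careful column-by-column tracking. The decisive structural fact is the sparsity of the Sylvester matrix $M$ in its top-right corner, which — as in the proof of Lemma~\ref{lemma:diagonal} — guarantees that each column in the descent admits essentially a single valid assignment once the previous columns have been fixed.
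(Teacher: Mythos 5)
Your existence half is correct and essentially identical to the paper's: both exhibit the same $i\times i$ leading-bottom-right principal submatrix $M_{\ell+1:2m,\ell+1:2m}$, and your explicit permutation $\pi$ produces exactly the monomial $a_1^{m-\ell-1}a_mb_\ell b_{m-1}^{m-1}$, matching the paper's claim.

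Your uniqueness half, however, has two substantive gaps. First, the step ``replacing any of these by $a_m$ from row $m-s$ forces $R$ to grow beyond what the monomial's degree permits'' is not a valid argument as stated: the only a priori bound is $P+R=i-m$, and a chain of $k$ consecutive $a_m$'s at columns $2m,\dots,2m-k+1$ is perfectly compatible with that bound for $k\le i-m$. The actual obstruction is different: choosing $a_m$ at a column $j\in\{m+1,\dots,2m-1\}$ (from $a$-row $j-m$) displaces the unique $b_{m-1}$ that $b$-row $j+1$ could contribute, forcing $b$-row $j+1$ to supply a non-$b_{m-1}$ entry; when $w=b_s$ there is only one such allowance in the whole product, and this (together with the constraint on where $b_s$ can legally sit without column conflicts) is what kills the extra $a_m$'s. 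Your phrasing misidentifies the mechanism, and a referee would flag it.

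Second, and more seriously, your descent tacitly assumes all $m$ $b$-rows lie in $S$ (e.g.\ ``The unique $b$-row then still unassigned is $m+1$''), which is only true when the extra factor $w$ is a $b$-variable, i.e.\ when $j\ge m+2$. For the columns $j\in\{2,\dots,m+1\}$ of $K$ (derivatives $\partial/\partial a_{j-1}$), the relevant monomials in $c_{2m-i}$ involve only $a$-variables and $b_{m-1}^{m-1}$, so only $m-1$ of the $b$-rows sit in $S$, and your descent does not apply. The paper disposes of this case with a short, separate observation that your sketch is missing: since $i\ge m+2$, the first column of any $i\times i$ principal submatrix $M_{S,S}$ has nonzero entries only $a_0$ (at row $s_1=\min S\le 2m-i+1\le m-1$) and $b_t$'s with $t\le s_1-1< m-1$, none of which lie in $\{a_1,a_r,b_{m-1},a_m\}$ for $r\ge 1$; hence no such monomial can occur. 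Your treatment of the ``$2m\notin S$'' case (``the remaining $a$-rows must form a strictly ascending chain\dots which cannot be completed'') is too vague to serve as a substitute, and in fact that case is exactly the $j\le m+1$ situation that needs this first-column argument. Incorporating that observation would close the gap and bring your proof in line with the paper's.
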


\begin{proof}
Obviously, we cannot get a nonzero term with degree $m$ for the variable $b_{m-1}$ in the $(i,j)$th entry of $K$ for all
$j=m+2,\dots,2m$ and $i=m+2,\dots,2m$, since only $m$ rows of $M$ contain $b$'s.  
It can be seen from the submatrix $M_{2m-i+1:2m,2m-i+1:2m}$ of $M$ that 
a nonzero term of the monomial
\[
b_{2m-i}a_1^{i-m-1}b_{m-1}^{m-1}a_m
\]
occurs in its determinant, which is an $i\times i$ principal minor. We claim that the determinant of $M_{2m-i+1:2m,2m-i+1:2m}$ is the unique $i\times i$ minor in the definition of  $c_{2m-i}$ (cf.\ \eqref{minor}) which has a nonzero term of the monomial $b_{2m-i}a_1^{i-m-1}b_{m-1}^{m-1}a_m$. 
To obtain $b_{2m-i}b_{m-1}^{m-1}$, for any $i\times i$ principal submatrix $T$ of $M$, the matrix $P$ (defined in the proof of Lemma~\ref{lemma:diagonal}) should be its principal submatrix as well, since only $m$ rows of $M$ contain $b$'s and we should take them all.  

First, the last column of the matrix $T$ contains only $a_m$, $b_m$ and $0$'s, from which $a_m$ should be chosen, since $2m-i<m$ for all the possible $i=m+2,\dots,2m$. 
Second, we cannot choose $b_{2m-i}$ from the lower triangular parts of the submatrix $P$, since otherwise, we can at most get 
$b_{m-1}^{m-2}$ according to Laplace's determinant formula. 
Third, by the second, we can only choose $b_{2m-i}$ from the first $i-m-1$ columns of $T$. Also, since we pick principal submatrices from $M$, the $(1,1)$th entry of $T$ would be $a_0$ for sure, and the others in the first column are distinct $b_t$'s. Therefore, we must choose $b_{2m-i}$ from the first column. Moreover, it should be the first nonzero entry other than $a_0$ in the first column.  
It then follows from the structure of the matrix $M$ that the only possible principal submatrix is the submatrix $M_{2m-i+1:2m,2m-i+1:2m}$. 

Therefore, it follows from the formulae for the coefficients and the definition for the Jacobian matrix that a nonzero term of the monomial $a_1^{i-m-1}b_{m-1}^{m-1}a_m$ occurs in the $(i,3m-i+2)$th entry of $K$ for all $i=m+2,\dots,2m$. 

With almost the same argument, we can see that there does not exist a nonzero term of a monomial with the maximum degree $m-1$ for $b_{m-1}$ and with only the variables $a_1$, $b_{m-1}$ and $a_m$ in the $(i,j)$th entry of $K$ for all $j\in\{m+2,\dots,2m\}\setminus\{3m-i+2\}$ and $i=m+2,\dots,2m$. 

In the next, 
we show that there does not exist a nonzero term of a monomial
\[
a_ra_1^pb_{m-1}^{m-1}a_m^q
\]
with some integers $p+q=i-m$ for any $r=1,\dots,m$ in any $i\times i$ principal minor of $M$ for all $i=m+2,\dots,2m$.
Note that the first column of any $i\times i$ principal submatrix of $M$ is of the form 
\[
(a_0,0,\dots,0,b_t,b_{t-1},\dots)^\mathsf{T} 
\]
for some $t<m-1$, since $i\geq m+2$. Therefore, each term of the minor must contain either the variable $a_0$ or a variable $b_s$ for some $s<m-1$. Neither case will result in a nonzero term of the monomial involving only $a_1$, $a_r$, $a_m$ and $b_{m-1}$ for some $r=1,\dots,m$. Thus, no term involving only $a_1$, $a_m$ and $b_{m-1}$ exists in the $(i,j)$-entry of $K$ for $i=m+2,\dots,2m$ and $j=2,\dots,m+1$. 

Henceforth, a nonzero term of the monomial $a_1^{i-m-1}b_{m-1}^{m-1}a_m$ uniquely appears in the $(i,3m-i+2)$-entry of $K$ for every $i=m+2,\dots,2m$. 
\end{proof}

\begin{lemma}\label{lemma:nonsingular}
The submatrix $K$ of the Jacobian matrix $H$ is nonsingular generically over the tensor space $\mathbb T(\mathbb C^2,m+1)$ for all $m=2,3,\dots$.
\end{lemma}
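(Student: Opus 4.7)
The plan is to exhibit the specific monomial
\[
M^\ast := a_1^{m(m-1)/2}\, a_m^{m-1}\, b_{m-1}^{m(m+1)/2 + (m-1)^2}
\]
as a term of $\det(K) \in \mathbb{C}[a_0,\ldots,a_m,b_0,\ldots,b_m]$ with nonzero coefficient. Once this is done, the zero locus of $\det(K)$ is a proper Zariski-closed subset of $\mathbb{TS}(\mathbb{C}^2,m+1)$, so $K$ is nonsingular on a nonempty open set, which gives generic nonsingularity.

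First I will expand $\det(K)$ by the Leibniz formula and focus on the permutation $\sigma \in \mathfrak{S}(2m)$ defined by $\sigma(i) = i$ for $1 \leq i \leq m+1$ and $\sigma(i) = 3m - i + 2$ for $m+2 \leq i \leq 2m$. Lemma~\ref{lemma:diagonal} supplies a nonzero term $\beta_i b_{m-1}^{i-1}$ in $K_{ii}$ for $i \leq m + 1$, and Lemma~\ref{lemma:antidiagonal} supplies a nonzero term $\gamma_i a_1^{i-m-1} a_m b_{m-1}^{m-1}$ in $K_{i,3m-i+2}$ for $i \geq m+2$. Multiplying exponents, $\prod_i K_{i,\sigma(i)}$ contains the monomial $M^\ast$ with coefficient $\operatorname{sgn}(\sigma) \prod_{i=1}^{m+1} \beta_i \prod_{i=m+2}^{2m} \gamma_i \neq 0$.

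Next I will show that $\sigma$ is the only permutation contributing to $M^\ast$. Given any $\tau \in \mathfrak{S}(2m)$ and any term $a_1^{p_i} a_m^{q_i} b_{m-1}^{r_i}$ chosen from $K_{i, \tau(i)}$ whose product equals a scalar multiple of $M^\ast$, row-homogeneity forces $p_i + q_i + r_i = i - 1$, and summing across $i$ gives $\sum r_i = m(m+1)/2 + (m-1)^2$. The trivial bound $r_i \leq i - 1$ for $i \leq m+1$ together with Lemma~\ref{lemma:antidiagonal}'s bound $r_i \leq m-1$ for $i > m+1$ among terms supported on $\{a_1, a_m, b_{m-1}\}$ sum to exactly this target, so every bound is saturated. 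Saturation combined with the uniqueness clauses of Lemmas~\ref{lemma:diagonal} and~\ref{lemma:antidiagonal} forces $\tau(i) = i$ for $i \leq m+1$ (and $p_i = q_i = 0$) and $\tau(i) = 3m-i+2$ for $i > m+1$. A short separate argument rules out column $j = 1$ for rows $i > m+1$: any term of $c_{2m-i}$ of shape $a_0 a_1^p a_m^q b_{m-1}^{m-1}$ would require all $m$ of the $b$-rows of $M$ to contribute $b$-entries to the corresponding principal minor, making the total $b$-degree $m$ rather than $m-1$.

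The step I expect to be the main obstacle is pinning down the term chosen from $K_{i,3m-i+2}$ for $i > m+1$, since a priori there could be several monomials $a_1^p a_m^q b_{m-1}^{m-1}$ with $p + q = i - m$ in that entry. I will refine the argument of Lemma~\ref{lemma:antidiagonal} to show that the only such term is $\gamma_i a_1^{i-m-1} a_m b_{m-1}^{m-1}$. Producing the factor $b_{m-1}^{m-1} b_{2m-i}$ in an $i \times i$ principal minor of $M$ forces all of rows $m+1,\ldots,2m$ to be included, and column non-overlap between the $b_{m-1}$- and $b_{2m-i}$-entries forces the $b_{2m-i}$-entry to lie in row $m+1$; this pins the principal submatrix down to $M_{2m-i+1:2m,\,2m-i+1:2m}$. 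Its leftover $a$-rows $\{2m-i+1,\ldots,m\}$ must be matched bijectively with the leftover columns $\{2m-i+2,\ldots,m\}\cup\{2m\}$ using only $a_1$- or $a_m$-entries; only row $m$ can reach column $2m$ (forcing it to pick $a_m$), and then the remaining rows must all pick $a_1$, yielding the single monomial $a_1^{i-m-1} a_m$ from the $a$-part. Consequently the coefficient of $M^\ast$ in $\det(K)$ is the nonzero scalar displayed above, proving that $\det(K)$ is a nonzero polynomial and hence that $K$ is generically nonsingular.
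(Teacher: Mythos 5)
Your plan targets the same monomial and uses the same scaffolding as the paper's proof: homogeneity together with Lemmas~\ref{lemma:diagonal} and~\ref{lemma:antidiagonal} pins down the unique permutation that can contribute, and the remaining work is to see the resulting coefficient is nonzero. You also correctly notice a subtlety the paper's text glosses over: even within the single entry $K_{i,3m-i+2}$ there could a priori be several monomials $a_1^p a_m^q b_{m-1}^{m-1}$ with $p+q=i-m$, and without the uniqueness of $(p,q)=(i-m-1,1)$ the distribution of $a_1$- and $a_m$-factors across rows is not forced and cancellation in the coefficient of $M^\ast$ cannot be excluded. That said, two of your supporting claims are incorrect as written.

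First, your exclusion of column $j=1$ for rows $i>m+1$: a term $a_0a_1^pa_m^qb_{m-1}^{m-1}$ of $c_{2m-i}$ has total $b$-degree exactly $m-1$, which means the corresponding principal submatrix contains exactly $m-1$ of the $m$ $b$-rows of $M$ --- not all of them. Your claim that ``all $m$ of the $b$-rows'' must participate is unjustified; the real obstruction lives on the $a$-row side and is more delicate. The clean fix, which the paper itself uses, is simply that $K_{1j}=0$ for $j>1$, so $\tau(1)=1$ in every surviving Leibniz term (equivalently $\det K=-m\det K_{2:2m,2:2m}$), and column $1$ is never available to a row $i>1$. Second, your refinement: column non-overlap between the $b_{m-1}$- and $b_{2m-i}$-entries does \emph{not} force the $b_{2m-i}$-entry into row $m+1$. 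If row $m+k$ supplies $b_{2m-i}$ in column $k+2m-i$, the non-overlap with the $b_{m-1}$-columns $j+m-1$, $j\neq k$, only forces $k<i-m$, so $k$ ranges over $1,\dots,i-m-1$. To pin $k=1$ you must look at the $a$-rows: for $k\geq 2$ the columns left for them are $(\{2m-i+1,\dots,m-1\}\setminus\{k+2m-i\})\cup\{k+m-1,2m\}$, which contains neither $m$ nor $2m-1$, so $a$-row $m-1$ cannot land on $a_1$ (requires column $m$) or on $a_m$ (requires column $2m-1$); hence $k\geq 2$ yields no term supported on $\{a_1,a_m,b_{m-1}\}$. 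With these two steps repaired --- the trivial column-$1$ reduction and the corrected ruling out of $k\geq 2$ --- your bijection argument for the $a$-rows goes through and the plan recovers the paper's conclusion.
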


\begin{proof}
We know that the first row of $K$ is
\[
(-m,0,\dots,0)^\mathsf{T}\in\mathbb C^{2m}, 
\]
which implies that $\operatorname{det}(K)=-m\operatorname{det}(K_{2:2m,2:2m})$. 

We consider terms of $\operatorname{det}(K)$ of monomials with only the variables $a_1$, $b_{m-1}$ and $a_m$. 
For $i=2,\dots,2m$, each entry of the $i$th row of the matrix $K$ is a homogeneous polynomial of degree $i-1$ in the variables $a_0,\dots,a_m,b_0,\dots,b_m$, and there are $m$ rows of $M$ consisting $b_{m-1}$. 
This, together with Lemmas~\ref{lemma:diagonal} and \ref{lemma:antidiagonal}, implies that the maximal possible degree for the variable $b_{m-1}$ in such a term in the determinant of the matrix $K$ is
\[
1+\dots+m+(m-1)(m-1)=\frac{m(m+1)}{2}+(m-1)^2.
\]
It follows from Lemmas~\ref{lemma:diagonal} and \ref{lemma:antidiagonal} again that such a term is unique and there is a unique way to consititute it:
choosing the diagonal entries of the submatrix  $K_{1:m+1,1:m+1}$ and then the anti-diagonal entries of the submatrix $K_{m+2:2m,m+2:2m}$. Moreover, by the same lemmas, 
the term of the monomial $a_1^{\frac{m(m-1)}{2}}b_{m-1}^{\frac{2m^2-m+1}{2}}a_m^{m-1}$ in the determinant of the $K$ has a nonzero coefficient. 

Therefore, the determinant of the matrix $K$ is a nonzero polynomial over the polynomial ring $\mathbb C[a_0,\dots,a_m,b_0,\dots,b_m]$. By Hilbert's zero theorem (cf. \cite{h77,h92,s02}), we conclude that the submatrix $K$ of the Jacobian matrix is nonsingular generically in the tensor space. 
\end{proof}

\begin{proposition}\label{prop:dominant-n2}
For any positive $m\geq 1$, the multiset-valued eigenvalue map $\phi : \mathbb{TS}(\mathbb C^2,m+1)\rightarrow\mathbb C^{2m}/\mathfrak{S}(2m)$ is dominant, i.e.,  
for a generic multiset $S\in\mathbb C^{2m}/\mathfrak{S}(2m)$, there exists a tensor $\mathcal T\in\mathbb{TS}(\mathbb C^2,m+1)$ such that
the set of eigenvalues (counting with multiplicities) of $\mathcal T$ is $S$. 
\end{proposition}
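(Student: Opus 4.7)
The plan is to assemble the claim directly from the machinery built up in Sections~\ref{sec:preliminary} and \ref{sec:two}. First I would dispose of the trivial matrix case $m=1$: here $\mathbb{TS}(\mathbb{C}^2,2)$ is just $\mathbb{C}^{2\times 2}$ and every pair of complex numbers arises as the eigenvalues of a $2\times 2$ matrix (e.g.\ via the companion matrix), so $\phi$ is in fact surjective.

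For $m\geq 2$, by Proposition~\ref{prop:tensor} it suffices to prove that the coefficient map $\mathbf c:\mathbb{TS}(\mathbb{C}^2,m+1)\to\mathbb{C}^{2m}$ is a dominant morphism. Both source and target are affine spaces, hence smooth, and
\[
\dim\mathbb{TS}(\mathbb{C}^2,m+1)=2(m+1)\geq 2m=\dim\mathbb{C}^{2m},
\]
so Proposition~\ref{prop:differential} applies as soon as I can exhibit a single point $\mathcal T_0$ at which the differential $d_{\mathcal T_0}\mathbf c$ has rank $2m$.

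The differential is represented by the Jacobian matrix $H\in\mathbb{C}^{2m\times(2m+2)}$ introduced in Section~\ref{sec:two}, whose left $2m\times 2m$ block is the matrix $K=H_{:,1:2m}$. Here the heavy lifting has already been done in Lemma~\ref{lemma:nonsingular}, which identifies a distinguished monomial $a_1^{m(m-1)/2}\,b_{m-1}^{(2m^2-m+1)/2}\,a_m^{m-1}$ whose coefficient in $\det(K)$ is nonzero. Consequently $\det(K)$ is a nonzero element of $\mathbb{C}[a_0,\ldots,a_m,b_0,\ldots,b_m]$, so the open set $\{\det(K)\neq 0\}$ is nonempty; picking any $\mathcal T_0$ in this open set gives $\operatorname{rank} H(\mathcal T_0)\geq \operatorname{rank} K(\mathcal T_0)=2m$. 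Since $H$ has only $2m$ rows, in fact $\operatorname{rank} d_{\mathcal T_0}\mathbf c=2m$, and Proposition~\ref{prop:differential} yields the dominance of $\mathbf c$, hence of $\phi$.

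The only step with any content is Lemma~\ref{lemma:nonsingular}, which was the main obstacle and is already proved; the present proposition is a one-line assembly from Propositions~\ref{prop:tensor} and \ref{prop:differential} together with that lemma. The only thing one should double-check when writing up the final version is the elementary dimension bookkeeping that permits the application of Proposition~\ref{prop:differential}, namely that $\dim\mathbb{TS}(\mathbb{C}^2,m+1)\geq \dim\mathbb{C}^{2m}$, which is immediate.
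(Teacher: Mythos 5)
Your proposal is correct and follows exactly the paper's route: reduce to dominance of the coefficient map $\mathbf c$ via Proposition~\ref{prop:tensor}, invoke Lemma~\ref{lemma:nonsingular} to get a point where the $2m\times 2m$ block $K$ of the Jacobian is nonsingular, and conclude by Proposition~\ref{prop:differential}. You are slightly more careful than the paper in separating out the $m=1$ case (where Lemma~\ref{lemma:nonsingular} does not apply and one falls back to the classical matrix fact) and in recording the dimension inequality $2(m+1)\geq 2m$ explicitly, but the substance is identical.
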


\begin{proof}
It follows from Propositions~\ref{prop:differential} and \ref{prop:tensor}, and Lemma~\ref{lemma:nonsingular}. 
\end{proof}

\subsection{Extensions}\label{sec:ext}
We first make a parenthesis on Sylvester matrices. 
A Sylvester matrix is a matrix of the form as $M$ (cf.\ \eqref{sylvester-matrix}): 
\begin{equation*}
\begin{bmatrix}a_0&\dots&a_p&0&0&\dots\\  &&&\dots\\ 0&\dots &0&a_0&\dots& a_p\\
b_0&\dots&b_q&0&0&\dots\\  &&&\dots\\ 0&\dots &0&b_0&\dots& b_q
\end{bmatrix},
\end{equation*}
while in general there are $q$ rows of $a$'s and $p$ row of $b$'s for different $p,q$. Therefore, the matrix is in $\mathbb C^{(p+q)\times (p+q)}$.
Up to permutation, we can assume without loss of generality that $q\geq p$. Then, with almost the same argument as 
the preceding analysis, we can obtain the following result on the inverse eigenvalue problem for Sylvester matrices.
\begin{proposition}[Sylvester Matrix]\label{prop:sylvester}
Let $m\geq 2$ be a positive integer. Given a generic multiset $S\in\mathbb C^{m}/\mathfrak{S}(m)$ there exists a Sylvester matrix $A\in\mathbb C^{m\times m}$ such that the set of eigenvalues (counting with multiplicities) of $A$ is $S$. 
\end{proposition}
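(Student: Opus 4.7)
The plan is to mirror, in the asymmetric Sylvester setting, the chain of arguments used in Section~\ref{sec:two} to prove Proposition~\ref{prop:dominant-n2}. First I parameterize a Sylvester matrix $M$ by the vector $(a_0,\ldots,a_p,b_0,\ldots,b_q)\in\mathbb C^{p+q+2}=\mathbb C^{m+2}$ and define the coefficient map $\mathbf c:\mathbb C^{m+2}\to\mathbb C^{m}$ that sends this vector to the coefficient vector $(c_{m-1},\ldots,c_0)$ of $\operatorname{det}(\lambda I - M)$. Since the multiset-valued eigenvalue map on Sylvester matrices factors as $q\circ\mathbf c$ in the sense of Definition~\ref{prop:dense-eig}, Lemma~\ref{lemma:roots} reduces the statement to showing that $\mathbf c$ is a dominant morphism. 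Because $\operatorname{dim}\mathbb C^{m+2}=m+2\geq m$, Proposition~\ref{prop:differential} further reduces matters to exhibiting an $m\times m$ submatrix $K$ of the Jacobian $H$ of $\mathbf c$ whose determinant is a nonzero polynomial in $\mathbb C[a_0,\ldots,a_p,b_0,\ldots,b_q]$; Hilbert's Nullstellensatz then delivers generic rank $m$.

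The heart of the proof is the adaptation of Lemmas~\ref{lemma:diagonal}, \ref{lemma:antidiagonal}, and \ref{lemma:nonsingular} to the case $q\geq p$. I would choose $K$ by deleting from $H$ the two columns corresponding to $\partial/\partial b_{q-1}$ and $\partial/\partial b_q$, mimicking the choice made when $p=q=m$, and split $K$ into a $(p+1)\times(p+1)$ leading block and a $(q-1)\times(q-1)$ trailing block (so that the two block sizes sum to $m$). For $i=1,\dots,p+1$, the analog of Lemma~\ref{lemma:diagonal} asserts that $K_{ii}$ contains a unique nonzero term of the monomial $b_{q-1}^{i-1}$, which arises from the leading $i\times i$ principal minor of the submatrix of $M$ formed by the last $a$-row together with all $p$ rows of $b$'s. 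For $i=p+2,\dots,m$, the analog of Lemma~\ref{lemma:antidiagonal} asserts that the anti-diagonal entry $K_{i,m+p+2-i}$ of the trailing block contains a unique term of the monomial $a_1^{i-p-1}b_{q-1}^{p-1}a_p$, coming from the single trailing principal minor $M_{m-i+1:m,m-i+1:m}$. Multiplying the contributions along the diagonal of the leading block and the anti-diagonal of the trailing block then yields a unique nonzero term
\[
a_1^{q(q-1)/2}\, b_{q-1}^{\,p(p+1)/2+(p-1)(q-1)}\, a_p^{\,q-1}
\]
in $\operatorname{det}(K)$, the exponent of $b_{q-1}$ being the maximal degree achievable among monomials in $\{a_1,b_{q-1},a_p\}$ alone. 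Hence $\operatorname{det}(K)\not\equiv 0$, and the proof is completed exactly as in Proposition~\ref{prop:dominant-n2}.

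The main obstacle I anticipate is the uniqueness check in the asymmetric case $q>p$: one must verify that no other $i\times i$ principal minor of $M$ can contribute a competing term of the claimed monomials. The structural argument of Lemma~\ref{lemma:antidiagonal} should carry over because the first column of any $i\times i$ principal submatrix of $M$ with $i>p+1$ has the form $(a_0,0,\dots,0,b_t,b_{t-1},\dots)^\mathsf{T}$ with $t<q-1$, forcing every term of such a minor to contain either $a_0$ or some $b_s$ with $s<q-1$, and hence to fail to contribute the maximum-$b_{q-1}$ monomial identified above. The combinatorial ideas are identical to those in Section~\ref{sec:two}, but the precise bookkeeping (tracking how the row and column indexing of $M$ shifts when $q\neq p$, and checking that the dimensions $p+1$ and $q-1$ of the two blocks of $K$ really do produce the monomial exponents claimed) is the only step that requires genuine care; the rest is a direct translation.
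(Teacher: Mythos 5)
Your proposal is correct and takes essentially the same route the paper intends: the paper dismisses Proposition~\ref{prop:sylvester} with the single phrase ``with almost the same argument as the preceding analysis,'' and your adaptation of Lemmas~\ref{lemma:diagonal}--\ref{lemma:nonsingular} to the asymmetric case $p\leq q$ is a faithful and correct fill-in. In particular, the block split $(p+1)+(q-1)=m$, the deletion of the $\partial/\partial b_{q-1}$ and $\partial/\partial b_q$ columns, the target monomial $a_1^{q(q-1)/2}b_{q-1}^{p(p+1)/2+(p-1)(q-1)}a_p^{q-1}$ (whose total degree $m(m-1)/2$ is right and which specializes to the paper's monomial when $p=q$), and the first-column observation with $t<q-1$ for $i>p+1$ all generalize the paper's argument correctly.
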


In the following, we will get back to tensors. 
Note that we have a decomposition of $\mathbb{TS}(\mathbb C^n,m+1)$ as a $\operatorname{GL}_n(\mathbb{C})$ module (cf.\ \cite{l12}):
\[
\mathbb{TS}(\mathbb C^n,m+1)=\mathbb{C}^n\otimes \operatorname{S}^m(\mathbb{C}^n)= \operatorname{S}^{m+1} \mathbb{C}^n\oplus \operatorname{S}_{m,1}\mathbb{C}^n.
\]
In particular, when $n=2$ we have
\[
\mathbb{TS}(\mathbb C^2,m+1)= \operatorname{S}^{m+1} \mathbb{C}^2\oplus \operatorname{S}_{m,1}\mathbb{C}^2=\operatorname{S}^{m+1} \mathbb{C}^2 \oplus (\wedge^2\mathbb{C}^2\otimes \operatorname{S}^{m-1}\mathbb{C}^2).
\]
Tensors in $\operatorname{S}^{m+1}\mathbb{C}^2$ are just symmetric tensors, which  can be represented by $m+2$ parameters. 
More precisely, for each symmetric tensor $\mathcal T$, the homogeneous polynomial $\mathbf z^\mathsf{T}\big(\mathcal T\mathbf z^m\big)$ with $\mathbf z=(x,y)^\mathsf{T}$ can be parameterized as $F(x,y)=a_{m+1} x^{m+1}+ \cdots + a_0 y^{m+1}\in\mathbb C[x,y]$ for $a$'s.   
\begin{lemma}\label{eqn:Sm+1}
The system of eigenvalue equations associated to $\mathcal{T}$ is
\begin{align*}
 \frac{\partial F(x,y)}{\partial x} &=\lambda x^m,\\
 \frac{\partial F(x,y)}{\partial y} &=\lambda y^m.
\end{align*}
\end{lemma}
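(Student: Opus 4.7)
The plan is to use the full symmetry of $\mathcal T\in\operatorname{S}^{m+1}\mathbb C^2$ to recover each component of $\mathcal T\mathbf{x}^m$ from the single scalar $F(x,y)$ by partial differentiation. Write $\mathbf z=(x,y)^\mathsf T$ and expand $F(x,y)=\mathbf z^\mathsf T(\mathcal T\mathbf z^m)$ explicitly as
\[
F(x,y) \;=\; \sum_{i_0,i_1,\ldots,i_m=1}^{2} t_{i_0 i_1\ldots i_m}\, z_{i_0} z_{i_1}\cdots z_{i_m},
\]
where the entries $t_{i_0i_1\ldots i_m}$ are invariant under arbitrary permutations of their $m+1$ indices because $\mathcal T\in\operatorname{S}^{m+1}\mathbb C^2$.

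The first (and really the only) step is to differentiate $F$ with respect to $x_j$ term by term. Each monomial $z_{i_0}\cdots z_{i_m}$ contributes $m+1$ summands upon differentiation, one for each index position that equals $j$; by the full symmetry of the $t$'s each such summand can be relabeled to place $j$ in the first slot. Collecting terms, I would obtain
\[
\frac{\partial F}{\partial x_j}(\mathbf x) \;=\; (m+1)\sum_{i_1,\ldots,i_m=1}^{2} t_{j\, i_1 \ldots i_m}\, x_{i_1}\cdots x_{i_m} \;=\; (m+1)\,(\mathcal T\mathbf x^m)_j.
\]
Combining this with the eigenvalue equation $(\mathcal T\mathbf x^m)_j=\lambda x_j^m$ from Definition~\ref{def:eigenvalue-eigenvector} yields $\partial F/\partial x_j=(m+1)\lambda\, x_j^m$. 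Absorbing the scalar factor $m+1$ into $\lambda$ (which merely rescales the eigenvalue and is harmless for the inverse eigenvalue problem, since the multiset of eigenvalues simply dilates) produces the two displayed equations of the lemma.

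There is essentially no obstacle beyond careful bookkeeping with symmetric multi-indices; the substantive content is the observation that restricting to the summand $\operatorname{S}^{m+1}\mathbb C^2$ of the decomposition $\mathbb{TS}(\mathbb C^2,m+1)=\operatorname{S}^{m+1}\mathbb C^2\oplus\operatorname{S}_{m,1}\mathbb C^2$ collapses the $n=2$ vector-valued eigenvalue equations into the gradient equations of a single degree-$(m+1)$ polynomial $F$ in two variables, which is the structural property to be exploited subsequently.
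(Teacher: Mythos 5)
Your proof is correct, and since the paper states Lemma~\ref{eqn:Sm+1} without any proof at all, the Euler-type differentiation argument you supply is exactly the expected justification. The step that matters is the one you identify: using the full symmetry of $\mathcal T$ to relabel the $m+1$ summands arising when $\partial/\partial z_j$ hits the monomial $z_{i_0}\cdots z_{i_m}$, giving $\partial F/\partial z_j=(m+1)(\mathcal T\mathbf z^m)_j$, which together with Euler's identity $x\,\partial_x F + y\,\partial_y F=(m+1)F$ provides a consistency check.

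You also correctly flag the factor of $m+1$, which the paper silently drops: with $F=\mathbf z^{\mathsf T}(\mathcal T\mathbf z^m)$ as the paper writes, one actually gets $\partial F/\partial x_j=(m+1)\lambda x_j^m$, not $\lambda x_j^m$. Your resolution --- rescale $\lambda$ (or, equivalently and slightly cleaner for the downstream use, replace $F$ by $\frac{1}{m+1}\mathbf z^{\mathsf T}(\mathcal T\mathbf z^m)$, which still sweeps out all degree-$(m+1)$ binary forms) --- is sound; the rescaling merely dilates the multiset of eigenvalues uniformly and does not affect the dominance arguments or the qualitative description in Proposition~\ref{prop:eig-Sm+1}. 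If you absorb $m+1$ into $\lambda$ rather than into $F$, you should note that the explicit eigenvalue formula $\lambda_j=\frac{\partial F}{\partial x}(\alpha_j,\beta_j)/\alpha_j^m$ in Proposition~\ref{prop:eig-Sm+1} then returns $(m+1)$ times the true eigenvalue, so absorbing the factor into $F$ is the convention more consistent with that proposition.
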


We characterize eigenvalues of a nonzero $\mathcal{T}\in \operatorname{S}^{m+1}\mathbb{C}^2$ in the next proposition.
\begin{proposition}\label{prop:eig-Sm+1}
Let $p_1,\cdots, p_k$ be distinct zeros of $F(x,y)$ in $\mathbb{P}^1$, with multiplicities $m_1,\cdots, m_k$ respectively. Let $L_{i}$ be the linear form vanishing on $p_i$ respectively. Eigenvalues of $\mathcal{T}$ are $0$ with multiplicity $\sum_{i=1}^{k}(m_i-1)$ and $\lambda_j= \frac{\partial F}{\partial x} (\alpha_j,\beta_j)/ \alpha_j^m , j=1,\dots, m+k-1$ with multiplicity one where $(\alpha_j,\beta_j)$ is a solution of 
\[
\frac{y^m \frac{\partial F}{\partial x}- x^m \frac{\partial F}{\partial y}}{\prod_{i=1}^k L_i^{m_i-1}}=0. 
\]
\end{proposition}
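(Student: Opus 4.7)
My plan is to translate the eigenvalue problem for $\mathcal{T}\in \operatorname{S}^{m+1}\mathbb{C}^2$, as recast in Lemma~\ref{eqn:Sm+1}, into a question about the zero locus of a single binary form. Rewriting $\nabla F(x,y)=\lambda(x^m,y^m)$ as the condition that $\nabla F(x,y)$ and $(x^m,y^m)^{\mathsf T}$ be linearly dependent, and eliminating $\lambda$, yields
\[
R_F(x,y)\;:=\;y^m\frac{\partial F}{\partial x}-x^m\frac{\partial F}{\partial y}=0.
\]
Since $R_F$ is a binary form of degree $2m$, its zero locus in $\mathbb{P}^1$ consists of $2m$ points counted with multiplicity, matching the degree $nm^{n-1}=2m$ of $\chi(\lambda)$.

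Next I would separate the zero and nonzero eigenvalues. An eigenvector produces $\lambda=0$ precisely when $\nabla F=0$; by Euler's identity $(m+1)F=xF_x+yF_y$, such points lie on $F=0$ and, since both first partials vanish, are zeros of $F$ of multiplicity at least two. Writing $F=L_i^{m_i}G_i$ with $L_i\nmid G_i$ shows that $L_i^{m_i-1}$ exactly divides both $F_x$ and $F_y$, hence $\prod_{i=1}^k L_i^{m_i-1}$ divides $R_F$. After this division, the quotient
\[
R(x,y)\;:=\;\frac{y^m F_x - x^m F_y}{\prod_{i=1}^k L_i^{m_i-1}}
\]
has degree $2m-\sum(m_i-1)=m+k-1$, and a short local computation at each $p_i$ with $m_i\geq 2$ (using $G_i(p_i)\ne 0$) will show that $R(p_i)\ne 0$, so the zeros of $R$ are disjoint from the critical locus of $F$. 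Every zero $(\alpha_j,\beta_j)$ of $R$ then produces a nonzero eigenvalue $\lambda_j=F_x(\alpha_j,\beta_j)/\alpha_j^m=F_y(\alpha_j,\beta_j)/\beta_j^m$, and the counts balance: $\sum(m_i-1)+(m+k-1)=2m$.

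The main obstacle will be matching these root multiplicities of $R_F$ with the \emph{algebraic} multiplicities in $\chi(\lambda)$: that $0$ really occurs with multiplicity $\sum(m_i-1)$ and that each $\lambda_j$ really has multiplicity one. The cleanest route I see is to identify $\chi(\lambda)$ up to sign with the Sylvester resultant $\operatorname{Res}_{x,y}(F_x-\lambda x^m,\,F_y-\lambda y^m)$, exactly as in~\eqref{sylvester-matrix}, and then compute the order of vanishing of this resultant at $\lambda=0$ and at each $\lambda=\lambda_j$. At $\lambda=0$ the resultant specializes to $\operatorname{Res}(F_x,F_y)$, and standard multiplicity formulae for resultants at common zeros of the two forms deliver the order $\sum(m_i-1)$ at once. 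For the nonzero eigenvalues, simplicity amounts to $R$ having simple roots together with the $\lambda_j$'s being pairwise distinct; this holds for generic $F$ in each stratum indexed by $(m_1,\dots,m_k)$, and I would extend to all such $F$ by a continuity argument on the roots of $\chi(\lambda)$ as $\mathcal{T}$ varies inside the stratum.
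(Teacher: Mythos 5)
Your core reduction — deriving $R_F := y^m F_x - x^m F_y$ from the eigenvalue system, observing that an eigenvector of a nonzero eigenvalue cannot be a critical point of $F$, and concluding that such eigenvectors are among the zeros of $R := R_F/\prod_i L_i^{m_i-1}$ — coincides with the paper's own argument. From that point the paper simply matches total counts: $\deg R = m+k-1$ together with $\sum_i m_i = m+1$ accounts for all $2m$ eigenvalues, and the proof stops there. It does not compute orders of vanishing of the Sylvester resultant at $\lambda=0$, nor does it appeal to genericity or continuity; so your plan for rigorously pinning down the algebraic multiplicities is a genuinely different and more ambitious route than the one the paper actually takes.

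Two steps in your outline deserve scrutiny. First, the local nonvanishing $R(p_i)\ne 0$ does not follow from $G_i(p_i)\ne 0$ alone: writing $F=L_i^{m_i}G_i$ with $L_i=\beta_i x-\alpha_i y$ and expanding, one finds
\[
\frac{R_F}{L_i^{m_i-1}}\bigg|_{(\alpha_i,\beta_i)}=m_i\,G_i(\alpha_i,\beta_i)\,\big(\alpha_i^{m+1}+\beta_i^{m+1}\big),
\]
so $R(p_i)\ne 0$ additionally requires $\alpha_i^{m+1}+\beta_i^{m+1}\ne 0$. When that extra factor vanishes, $p_i$ is a root of $R$ and the corresponding $\lambda_j$ equals $0$, so the stated partition of the spectrum (``$0$ with multiplicity $\sum(m_i-1)$, the $\lambda_j$ each simple'') needs a caveat; the paper's terse proof is silent on this as well, and the proposition is best read as describing the eigenvalue multiset rather than asserting each $\lambda_j$ is individually nonzero and simple. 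Second, the proposed continuity argument runs in the wrong direction: degenerating $F$ inside a stratum can only cause roots of $\chi(\lambda)$ to collide, so establishing simplicity of the $\lambda_j$'s on a dense open subset of the stratum does not transfer that simplicity to the rest of the stratum. Indeed the case $\alpha_i^{m+1}+\beta_i^{m+1}=0$ already produces non-generic $F$ within the stratum where the very conclusion you hope to reach by continuity fails.
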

\begin{proof}
By the equations in Lemma~\ref{eqn:Sm+1} we obtain
\[
\lambda(y^m \frac{\partial F}{\partial x}- x^m \frac{\partial F}{\partial y})=0.
\]
Let $\lambda\ne 0$ be an eigenvalue of $\mathcal{T}$ and $(\alpha,\beta)\ne (0,0)$ be an eigenvector corresponding to $\lambda$. Then,  either $\frac{\partial F}{\partial x}(\alpha,\beta)\ne 0$ or $\frac{\partial F}{\partial y}(\alpha,\beta)\ne 0$, i.e., $(\alpha,\beta)$ is a root of 
\[
\frac{y^m \frac{\partial F}{\partial x}- x^m \frac{\partial F}{\partial y}}{\prod_{i=1}^k L_i^{m_i-1}}=0.
\]
Since $\mathcal{T}$ has $2m$ eigenvalues and $\sum_{i=1}^km_i=m+1$, we see that they are either $0$ or of the forms as claimed.
\end{proof}

To conclude this section, we consider eigenvalues of tensors in $\wedge^2\mathbb{C}^2\otimes \operatorname{S}^{m-1}\mathbb{C}^2$.
\begin{lemma}\label{lemma:Sm1equation}
The system of eigenvalue equations associated to a tensor $\mathcal{T}\in \wedge^2\mathbb{C}^2\otimes \operatorname{S}^{m-1}\mathbb{C}^2$ is of the form 
\begin{eqnarray*}
y f(x,y)=\lambda x^m,\\
-x f(x,y)=\lambda y^m,
\end{eqnarray*}
where $f(x,y)$ is a homogeneous polynomial of degree $m-1$.
\end{lemma}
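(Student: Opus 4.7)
The plan is to make the embedding $\wedge^2\mathbb{C}^2\otimes\operatorname{S}^{m-1}\mathbb{C}^2\hookrightarrow\mathbb{TS}(\mathbb{C}^2,m+1)=\mathbb{C}^2\otimes\operatorname{S}^m\mathbb{C}^2$ explicit in terms of slice polynomials, and then read the eigenvalue equations off directly. The key tool is the $\operatorname{GL}_2(\mathbb{C})$-equivariant multiplication map
\[
\mu\colon \mathbb{C}^2\otimes\operatorname{S}^m\mathbb{C}^2\longrightarrow \operatorname{S}^{m+1}\mathbb{C}^2,\qquad v\otimes h\longmapsto v\cdot h,
\]
whose image is the symmetric summand of the decomposition $\mathbb{C}^2\otimes\operatorname{S}^m\mathbb{C}^2=\operatorname{S}^{m+1}\mathbb{C}^2\oplus\operatorname{S}_{m,1}\mathbb{C}^2$ recalled in Section~\ref{sec:ext}. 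Since $\mu$ is surjective and $\operatorname{GL}_2(\mathbb{C})$-equivariant, its kernel is a subrepresentation of dimension $m$, and hence must coincide with $\operatorname{S}_{m,1}\mathbb{C}^2\cong \wedge^2\mathbb{C}^2\otimes\operatorname{S}^{m-1}\mathbb{C}^2$.

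With this characterization in hand, I would write a general $\mathcal{T}\in\mathbb{C}^2\otimes\operatorname{S}^m\mathbb{C}^2$ in terms of its two symmetrized slices as $\mathcal{T}=e_1\otimes p(x,y)+e_2\otimes q(x,y)$, where $p,q\in\operatorname{S}^m\mathbb{C}^2$ are homogeneous polynomials of degree $m$ in $\mathbf{z}=(x,y)^\mathsf{T}$. The condition $\mathcal{T}\in\ker(\mu)$ reads
\[
x\,p(x,y)+y\,q(x,y)=0\quad\text{in }\mathbb{C}[x,y].
\]
Since $\mathbb{C}[x,y]$ is a UFD and $\gcd(x,y)=1$, this forces $y\mid p$; writing $p(x,y)=y\,f(x,y)$ for some homogeneous $f$ of degree $m-1$ then immediately yields $q(x,y)=-x\,f(x,y)$.

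Finally, contracting $\mathcal{T}$ against $\mathbf{x}^{\otimes m}$ is the same as evaluating each (symmetrized) slice polynomial at $\mathbf{x}=(x,y)^\mathsf{T}$, so
\[
\mathcal{T}\mathbf{x}^m=\bigl(y\,f(x,y),\,-x\,f(x,y)\bigr)^\mathsf{T}.
\]
Equating this with $\lambda\mathbf{x}^{[m]}=(\lambda x^m,\lambda y^m)^\mathsf{T}$ gives exactly the claimed system. There is no real obstacle in this argument: the only substantive point is identifying $\ker(\mu)$ with the correct Schur summand, after which both equations drop out simultaneously from the one-line divisibility argument above.
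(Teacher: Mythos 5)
Your proposal is correct, but the route is different from the paper's. The paper's own proof is essentially a one-line computation: write $\mathcal{T}=(e_1\wedge e_2)\otimes f$ and expand $e_1\wedge e_2$ under the explicit embedding $(u\wedge v)\otimes g\mapsto u\otimes(v\cdot g)-v\otimes(u\cdot g)$ into $\mathbb{C}^2\otimes\operatorname{S}^m\mathbb{C}^2$; reading off the two slice polynomials gives $yf$ and $-xf$ immediately. You instead characterize $\operatorname{S}_{m,1}\mathbb{C}^2$ intrinsically as the kernel of the $\operatorname{GL}_2$-equivariant multiplication map $\mu\colon\mathbb{C}^2\otimes\operatorname{S}^m\mathbb{C}^2\to\operatorname{S}^{m+1}\mathbb{C}^2$ (a dimension count plus Schur's lemma pins this down, since $\operatorname{S}^{m+1}\mathbb{C}^2$ appears with multiplicity one), and then derive $p=yf$, $q=-xf$ from $xp+yq=0$ via coprimality in the UFD $\mathbb{C}[x,y]$. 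Both are valid and land on the same slices. The paper's version is more elementary and avoids invoking Schur's lemma, while yours is more structural and has the small advantage of not depending on a particular choice of embedding formula for $\wedge^2\mathbb{C}^2\otimes\operatorname{S}^{m-1}\mathbb{C}^2$; since the decomposition is multiplicity-free the two viewpoints necessarily coincide, so this is a matter of taste rather than substance. One small caveat worth making explicit in your write-up: you should note that $\mu$ is surjective (which makes the dimension count $2(m+1)-(m+2)=m$ match $\dim\operatorname{S}_{m,1}\mathbb{C}^2$), which you assert but could justify with a single sentence by observing that every monomial of degree $m+1$ is $x$ or $y$ times a monomial of degree $m$.
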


\begin{proof}
Let us fix the standard basis $\{\mathbf e_1,\mathbf e_2\}$ for $\mathbb{C}^2$ then an element in $\wedge^2\mathbb{C}^2\otimes \operatorname{S}^{m-1}\mathbb{C}^2$ is 
\[
\mathcal{T}=(\mathbf e_1\wedge \mathbf e_2) \otimes f,
\]
where $f\in \operatorname{S}^{m-1}\mathbb{C}^2$. We identify $\operatorname{S}^{m-1}\mathbb{C}^2$ with the space of homogeneous polynomials of degree $m-1$ in two variables with coefficients in the field of complex numbers.
Expand $\mathbf e_1\wedge \mathbf e_2$ and write out the equation system corresponding to $\mathcal{T}$, the claim follows.
\end{proof}

Let $\mathcal{T}\in \wedge^2\mathbb{C}^2\otimes \operatorname{S}^{m-1}\mathbb{C}^2$, then we describe eigenvalues of $\mathcal{T}$ in the next proposition.
\begin{proposition}\label{prop:Sm1}
Eigenvalues of $\mathcal{T}$ are $0$ with multiplicity $m-1$ and $\omega_if(1,\omega_i),i=0,\dots, m$ where $\omega_i$ is an $(m+1)$-th root of $-1$.
\end{proposition}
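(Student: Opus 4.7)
The plan is to combine the two eigenvalue equations to pin down the nonzero eigenvalues explicitly, then use a Sylvester-resultant argument to control the algebraic multiplicity of $0$, and finally conclude by a Zariski density argument.

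First, I would multiply the first equation in Lemma~\ref{lemma:Sm1equation} by $x$ and the second by $y$ and add, obtaining $\lambda(x^{m+1}+y^{m+1})=0$. Hence any eigenpair $(\lambda,(x,y))$ with $\lambda\ne 0$ must satisfy $x^{m+1}+y^{m+1}=0$, which has $m+1$ projective solutions $(1:\omega_i)$ where $\omega_0,\dots,\omega_m$ are the $(m+1)$-th roots of $-1$. Substituting $(x,y)=(1,\omega_i)$ into $yf(x,y)=\lambda x^m$ yields $\lambda = \omega_i f(1,\omega_i)$, so the nonzero eigenvalues lie in $\{\omega_i f(1,\omega_i):0\le i\le m\}$ and each such value is indeed realized by the corresponding eigenvector.

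To fix the multiplicity of $0$, I would express $\chi(\lambda)=\det(M-\lambda I)$ with $M$ the Sylvester matrix attached to the pair of binary forms $(yf,-xf)$ of degree $m$, and invoke the classical identity $\dim\ker M = \deg\gcd(yf,-xf)$. Since $\gcd(yf,-xf)=f$ has degree $m-1$ whenever $f\ne 0$, the null space of $M$ has dimension $m-1$, so the algebraic multiplicity of $0$ as a root of $\chi(\lambda)$ is at least $m-1$; equivalently $\lambda^{m-1}\mid\chi(\lambda)$.

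Finally, for generic $f$ the $m+1$ values $\omega_i f(1,\omega_i)$ are pairwise distinct and nonzero, so together with the $m-1$ forced copies of $0$ they saturate $\deg\chi=2m$. Hence on a Zariski-dense open subset of parameters,
\[
\chi(\lambda)=\lambda^{m-1}\prod_{i=0}^m\bigl(\lambda-\omega_i f(1,\omega_i)\bigr).
\]
Both sides are polynomial in $\lambda$ and in the coefficients of $f$, so the identity extends to every $\mathcal T\in\wedge^2\mathbb C^2\otimes \operatorname{S}^{m-1}\mathbb C^2$, and reading off roots with multiplicity gives the proposition. The main subtlety is the bookkeeping of multiplicities when some $\omega_i f(1,\omega_i)$ vanish or coincide: the Sylvester nullity-gcd lemma combined with the generic-then-specialize argument handles this without any case analysis.
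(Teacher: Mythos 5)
Your identification of the nonzero eigenvalues is exactly the paper's argument: multiply, add, factor out $\lambda(x^{m+1}+y^{m+1})$, set $y=\omega_i x$, and evaluate. Where you diverge is in pinning down the multiplicity of $0$, and there your argument is both different from and more careful than the paper's. The paper simply observes via Hilbert's Nullstellensatz that $f$ has a nontrivial root (so $0$ is an eigenvalue) and then asserts that $0$ ``gets the rest multiplicity $m-1$'' by counting against the degree bound $2m$; this implicitly assumes that the $m+1$ values $\omega_i f(1,\omega_i)$ each contribute algebraic multiplicity one, which is not justified and not even true for special $f$. You replace that shortcut by the Sylvester nullity--gcd identity: since $\gcd(yf,-xf)=f$ has degree $m-1$ (for $f\ne 0$), the matrix $M$ from \eqref{sylvester-matrix} has nullity exactly $m-1$, so the geometric (hence algebraic) multiplicity of $0$ is at least $m-1$, and then the saturation argument for generic $f$ together with specialization of the polynomial identity $\chi(\lambda)=\lambda^{m-1}\prod_{i=0}^m(\lambda-\omega_i f(1,\omega_i))$ closes the gap uniformly, including the degenerate cases where some $\omega_i f(1,\omega_i)$ vanish or coincide. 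This buys you a rigorous treatment of multiplicities that the paper glosses over, at the modest cost of invoking the rank--gcd lemma for Sylvester matrices, which the paper never needs to state. Both proofs reach the same factorization; yours simply makes explicit the continuity/density step that the paper leaves implicit.
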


\begin{proof}
If $\lambda\ne 0$ is an eigenvalue of $\mathcal{T}$ then 
\begin{align*}
yf(x,y)=\lambda x^m \textit{ and }
-xf(x,y)=\lambda y^m.
\end{align*}
Thus 
\[
\lambda (x^{m+1}+y^{m+1}) =0.
\]
Since $\lambda\ne 0$ we can derive 
\[
y=\omega_i x, i=0,\dots, m. 
\]
It is easy to obtain 
\[
\lambda=y f(x,y)/x^m = \omega_i f(1,\omega_i).
\]
Lastly, every homogeneous polynomial $f(x,y)$ definitely has a nontrivial solution in $\mathbb C^2$ by Hilbert's zero theorem, we conclude that $0$ is also an eigenvalue of $\mathcal T$. Since the total number of eigenvalues is $2m$, we see that $0$ gets the rest multiplicity $m-1$.
\end{proof}

\begin{remark}
We notice that Proposition \ref{prop:Sm1} gives an algorithm to reconstruct a tensor $\mathcal{T}\in \wedge^2\mathbb{C}^2\otimes\operatorname{S}^{m-1}\mathbb{C}^2$ from given $m+1$ numbers $\lambda_0,\dots, \lambda_{m}$ such that eigenvalues of $\mathcal{T}$ are $\lambda_0,\dots, \lambda_{m}$ and zero by solving a linear system. Namely, we consider the following linear system
\[
\left(
\begin{matrix}
1 & 1 & \cdots & 1 & 1\\
\omega_1 & \omega_1^2 & \cdots & \omega_1^{m-1} & \omega_1^m \\
\vdots & \vdots & \ddots & \vdots & \vdots \\
\omega_m & \omega_m^2 & \cdots & \omega_m^{m-1} & \omega_m^m  
\end{matrix}
\right)\left(
\begin{matrix}
a_0 \\
a_1 \\
\vdots \\
a_{m-1}
\end{matrix}
\right)=\left(
\begin{matrix}
\lambda_0 \\
\lambda_1 \\
\vdots \\
\lambda_{m}
\end{matrix}
\right),
\]
where $\omega_i$'s are the $m+1$-th roots of $-1$.
\begin{enumerate}
\item If this overdetermined linear system has no solution then there is no tensor $\mathcal{T}\in \wedge^2\mathbb{C}^2\otimes\operatorname{S}^{m-1}\mathbb{C}^2$ can have $\{\lambda_0,\dots, \lambda_{m},0,\dots, 0\}$ as the multiset of eigenvalues.
\item If this overdetermined linear system has a solution then the solution gives a homogeneous polynomial $f(x,y)=\sum_{i=0}^{m-1}a_ix^{m-1-i}y_i$ which gives the desired tensor $\mathcal{T}$ (cf.\ Lemma~\ref{lemma:Sm1equation}). 
\end{enumerate}  
\end{remark}

\section{The Exceptional Cases}\label{sec:general}
In this section, we show that the eigenvalue map $\phi : \mathbb{TS}(\mathbb C^n,m+1)\rightarrow\mathbb C^{nm^{n-1}}/\mathfrak{S}(nm^{n-1})$ is dominant for the \textit{exceptional cases}
\[
(n,m)=(3,2),\ (4,2),\ (3,3).
\]
We use Propositions~\ref{prop:differential} and \ref{prop:tensor} to prove the results. The basic idea is the same as Section~\ref{sec:two}: finding a point in $\mathbb{TS}(\mathbb C^n,m+1)$ such that the differential of the coefficient map $\mathbf c :\mathbb{TS}(\mathbb C^n,m+1)\rightarrow\mathbb C^{nm^{n-1}}$ at this point has the maximal rank $nm^{n-1}$. The difference is instead of proving a generic property on the Jacobian matrix, we find a concrete point at which the Jacobian matrix is of full rank. 
\subsection{Macaulay's formulae of characteristic polynomials}\label{sec:mac}
We refer to \cite{clo98,s02,gkz94,h77} for the theory and computation of resultants and hyperdeterminants. The determinant of a tensor
is actually the resultant of a specially constructed system of homogeneous polynomials of the same degree \cite{hhlq13}. 

Let
\[
d=nm-n+1,
\]
and 
\[
S=\{x_1^d,x_1^{d-1}x_2,\dots,x_n^d\}
\]
be the set of monomials in $x_1,\dots,x_n$ of degree $d$ in lexicographic order. A monomial of degree $d$ is written as $\mathbf x^{\alpha}=x_1^{\alpha_1}\dots x_n^{\alpha_n}$ with $\alpha\in\mathbb N^n$ and $\alpha_1+\dots+\alpha_n=d$. 
The set $S$ divides into $n$ subsets as follows:
\[
S_i:=\{\mathbf x^\alpha \in S : \alpha_i\geq m,\ \alpha_j<m\ \text{for all }j=1,\dots,i-1\},\ \text{for all }i=1,\dots,n.
\]
It is easy to see that $\{S_1,\dots,S_n\}$ are mutually disjoint and $\cup_{i=1}^nS_i=S$. Note that the cardinality of $S$ is
\[
w=|S|={d+n-1\choose d}.
\]

Let $\mathcal T\in\mathbb{TS}(\mathbb C^n,m+1)$. We write
\[
f_i(\mathbf x):=(\mathcal T\mathbf x^m-\lambda\mathbf x^{[m]})_i
\]
as the $i$th defining equation for the eigenvalue problem for $i=1,\dots,n$.
For the $n$ homogeneous polynomials $f_1(\mathbf x),\dots,f_n(\mathbf x)$ in $n$ variables $\mathbf x=(x_1,\dots,x_n)$, parameterized by $\mathcal T$ and $\lambda$, we can formulate a system of $w$ homogeneous polynomials
\begin{equation}\label{mac:matrix}
\mathbf x^{\alpha-m\mathbf e_i}\cdot f_i(\mathbf x),\ \text{for all }\mathbf x^\alpha\in S_i,\ \text{for all }i=1,\dots,n,
\end{equation}
where $\mathbf e_i\in\mathbb R^n$ is the $i$th standard basis vector. This system of polynomials is naturally indexed by monomials $\mathbf x^\alpha\in S$. 
With respect to the basis $S$, we can represent the system \eqref{mac:matrix} as a matrix $R\in\mathbb C[\mathcal T,\lambda]^{w\times w}$. 
A monomial $\mathbf x^\alpha$ is reduced, if there exists exactly one $i\in\{1,\dots,n\}$ such that $\alpha_i\geq m$. The submatrix of $R$ obtained by deleting all rows and columns of reduced monomials is denoted by $R'$. Note that the entries of both $R$ and $R'$ are 
linear forms of the variables $t_{ii_1\dots i_m}$ and $\lambda$. 

It follows from Macaulay's formula for resultant  (cf.\ \cite{m02}) that the characteristic polynomial of $\mathcal T$ is
\begin{equation}\label{mac:characteristic}
\operatorname{det}(\mathcal T-\lambda\mathcal I)=\pm \frac{\operatorname{det}(R)}{\operatorname{det}(R')}. 
\end{equation}
 
With the characteristic polynomial \eqref{mac:characteristic}, we can compute out the coefficient map $\mathbf c : \mathbb{TS}(\mathbb C^n,m+1)\rightarrow\mathbb C^{nm^{n-1}}$ and its Jacobian matrix $H$. Note that, we may restrict our map $\mathbf c$ on a subspace $V\subseteq\mathbb{TS}(\mathbb C^n,m+1)$ as long as the dimension of $V$ is larger than $nm^{n-1}$ (cf.\ Proposition~\ref{prop:necessary-general}) to reduce the computational cost.

\subsection{ Third order three dimensional tensors}\label{sec:32}
In this section, we present the detailed computation for third order three dimensional tensors, i.e., $(n,m)=(3,2)$. The details serve as an example to illustrate the method in Section~\ref{sec:mac}. 
The computation has been majorly conducted by Macaulay2 \cite{gs} together with Matlab. 

For any tensor $\mathcal T\in\mathbb {TS}(\mathbb C^3,3)$, its system of eigenvalue equations is
\[
\begin{cases}\sum_{j,k=1}^3t_{1jk}x_jx_k=\lambda x_1^2,\\
\sum_{j,k=1}^3t_{2jk}x_jx_k=\lambda x_2^2,\\ 
\sum_{j,k=1}^3t_{3jk}x_jx_k=\lambda x_3^2,
\end{cases}
\]
which can be equivalently parameterized as 
\[
\begin{cases}f_1(A,\lambda,\mathbf x):=a_{11}x_1^2+a_{12}x_1x_2+a_{13}x_2^2+a_{14}x_1x_3+a_{15}x_2x_3+a_{16}x_3^2-\lambda x_1^2=0,\\
f_2(A,\lambda,\mathbf x):=a_{21}x_1^2+a_{22}x_1x_2+a_{23}x_2^2+a_{24}x_1x_3+a_{25}x_2x_3+a_{26}x_3^2-\lambda x_1^2=0,\\
f_3(A,\lambda,\mathbf x):=a_{31}x_1^2+a_{32}x_1x_2+a_{33}x_2^2+a_{34}x_1x_3+a_{35}x_2x_3+a_{36}x_3^2-\lambda x_1^2=0.
\end{cases}
\]

Let
\[
S=\{x_1^4,x_1^3x_2,\dots,x_2^4,\dots,x_3^4\}
\]
be the set of all monomials of $x_1, x_2, x_3$ with total degree $4$ in lexicographic order, and
\[
T_1=\{x_1^2, x_1x_2, x_1x_3, x_2^2, x_2x_3, x_3^2\},\
T_2=\{x_1x_2, x_1x_3, x_2^2, x_2x_3, x_3^2\},
\]
and
\[
T_3=\{x_1x_2, x_1x_3,x_2x_3,x_3^2\}.
\]
It follows that the cardinality of $S$ is
\[
|S|={3+4-1\choose 4}=15.
\]

We generate a system of $15$ polynomial equations via
\[
f_ig^i_j = 0,\ \text{for all }g^i_j\in T_i,\ \text{for all }i=1,2,3. 
\]
Regarding $f_ig^i_j\in\mathbb C[A,\lambda][\mathbf x]$, we can get a square matrix $M\in\big(\mathbb C[A,\lambda]\big)^{15\times 15}$ as the coefficient matrix of the polynomial equations 
\[
f_1g^1_1=0,\dots,f_1g^1_6=0, f_2g^2_1=0,\dots,f_2g^2_5=0, f_3g^3_1=0,\dots,f_3g^3_4=0
\]
in the canonical basis $S$. It follows from Section~\ref{sec:mac} that the characteristic polynomial of $\mathcal T$ is
\[
\operatorname{det}(\mathcal T-\lambda\mathcal I)=\pm \frac{\operatorname{det}(M)}{(a_{11}-\lambda)^2(a_{23}-\lambda)}. 
\]
The matrix $M$ is
\[
\small
\begin{bmatrix} 
a_{11}& a_{12}& a_{13}& a_{14}& a_{15}& a_{16}&    0&    0&    0&    0&    0&    0&    0&    0&    0\\
   0& a_{11} & a_{12}&    0& a_{14}&    0& a_{13}& a_{15}&    0&    0&    0& a_{16}&    0&    0&    0\\
    0&    0& a_{11} &    0&    0&    0& a_{12}& a_{14}& a_{13}& a_{15}& a_{16}&    0&    0&    0&    0\\
     0&    0&    0& a_{11} & a_{12}& a_{14}&    0& a_{13}&    0&    0&    0& a_{15}& a_{16}&    0&    0\\
   0&    0&    0&    0& a_{11} &    0&    0& a_{12}&    0& a_{13}& a_{15}& a_{14}&    0& a_{16}&    0\\
     0&    0&    0&    0&    0& a_{11} &    0&    0&    0&    0& a_{13}& a_{12}& a_{14}& a_{15}& a_{16}\\
    0& a_{21}& a_{22}&    0& a_{24}&    0& a_{23} & a_{25}&    0&    0&    0& a_{26}&    0&    0&    0\\
    0&    0&    0& a_{21}& a_{22}& a_{24}&    0& a_{23} &    0&    0&    0& a_{25}& a_{26}&    0&    0\\
    0&    0& a_{21}&    0&    0&    0& a_{22}& a_{24}& a_{23} & a_{25}& a_{26}&    0&    0&    0&    0\\
    0&    0&    0&    0& a_{21}&    0&    0& a_{22}&    0& a_{23} & a_{25}& a_{24}&    0& a_{26}&    0\\
    0&    0&    0&    0&    0& a_{21}&    0&    0&    0&    0& a_{23} & a_{22}& a_{24}& a_{25}& a_{26}\\
    0& a_{31}& a_{32}&    0& a_{34}&    0& a_{33}& a_{35}&    0&    0&    0& a_{36}&    0&    0&    0\\
    0&    0&    0& a_{31}& a_{32}& a_{34}&    0& a_{33}&    0&    0&    0& a_{35}& a_{36}&    0&    0\\
    0&    0&    0&    0& a_{31}&    0&    0& a_{32}&    0& a_{33}& a_{35}& a_{34}&    0& a_{36}&    0\\
   0&    0&    0&    0&    0& a_{31}&    0&    0&    0&    0& a_{33}& a_{32}& a_{34}& a_{35}& a_{36}
\end{bmatrix}-\lambda I,
\]
where $I$ is the identity matrix of appropriate size. 

If we restrict the tensor space to be with $a_{21}=a_{31}=a_{13}=a_{33}=0$, then we have 
\[
\operatorname{det}(\mathcal T-\lambda\mathcal I)=\frac{\operatorname{det}(M)}{(a_{11}-\lambda)^2(a_{23}-\lambda)}=\operatorname{det}(M')
\]
with
\[
M'=\small\begin{bmatrix} 
    a_{11}&    0&    0&    0& a_{12}& a_{14}& a_{15}& a_{16}&    0&    0&    0&    0\\
     0& a_{11}& a_{12}& a_{14}&    0&0&      0&    0& a_{15}& a_{16}&    0&    0\\
   0&    0& a_{11}&    0&    0& a_{12}&     0& a_{15}& a_{14}&    0& a_{16}&    0\\
    0&    0&    0& a_{11}&    0&    0&    0&    0& a_{12}& a_{14}& a_{15}& a_{16}\\
  a_{22}&    0& a_{24}&    0& a_{23}& a_{25}&        0&    0& a_{26}&    0&    0&    0\\
  0& 0& a_{22}& a_{24}&    0& a_{23}&       0&    0& a_{25}& a_{26}&    0&    0\\
      0&    0&0&    0&    0& a_{22}&     a_{23}& a_{25}& a_{24}&    0& a_{26}&    0\\
      0&    0&    0& 0&    0&    0&    0&     a_{23}& a_{22}& a_{24}& a_{25}& a_{26}\\
    a_{32}&    0& a_{34}&    0&0& a_{35}&        0&    0& a_{36}&    0&    0&    0\\
     0& 0& a_{32}& a_{34}&    0& 0&        0&    0& a_{35}& a_{36}&    0&    0\\
      0&    0& 0&    0&    0& a_{32}&     0& a_{35}& a_{34}&    0& a_{36}&    0\\
   0&    0&    0& 0&    0&    0&    0&     0& a_{32}& a_{34}& a_{35}& a_{36}
\end{bmatrix}-\lambda I.
\]
Note that we restricted our coefficient map $\mathbf c$ on a linear subspace $V$ of dimension $14$. We use Macaulay2 to compute the $12\times 14$ Jacobian matrix. The evaluation of this matrix at the point
\begin{align*}
a_{11}&=1, a_{12}=2, a_{14}=3, a_{15}=4, a_{16}=5,\\
a_{22}&=6, a_{23}=7, a_{24}=8,a_{25}=9,a_{26}=10,\\
a_{32}&=11,a_{34}=12,a_{35}=13,a_{36}=14
\end{align*}

is 
\begin{multline*}
\tiny\left[\begin{matrix}   -4       &   0     &      0       &     0     &      0    &       0     &  -4     \\   348   &       -12     &    -24   &       0    &       0  &         -4 &       
      324        \\
      -11948    &   528   &      1575   &      -336   &     -420   &     123       &
      -10190      \\ 229449      & -6573  &     -42450 &      9606   &     14460  &     -435      &  178549  \\
       -2841839   &  8007   &     669288  &     -123924  &   -254385 &    -40559    &
      -1983021\\
       24015886  &   693225  &    -6820251 &    716979 &     2947131 &    838271    &
      14685692 \\
       -141005226 &  -8897502 &   46520475  &   -55500  &    -23636976  & -7517499  &
      -74200394  \\
       577067743 &   52779339 &   -214721160 &  -19191762 &  128734014  & 37178105  &
      258147039 \\
       -1615274021&  -168212115&  650003046&    85305210 &   -443297901 & -110199791&
      -603940123   \\
       2874026450 &  286931673 &  -1165235823&  -145117011&  852500403 &  194375103 &
      876534196  \\
       -2794768018&  -259796358&  1046384496 &  111968790 &  -778096974&  -179135234&
      -672256468  \\
       1066887388&   96499788  &  -356759172  & -33512052 &  261090648 &  64501920  &
      202844400  
      \end{matrix}\right.\\
      \tiny\left. \begin{matrix}       0     &      0   &        0      &     0  &    0&
        0   &        -4 \\         0     &      -26   &      0   &        0   &        -6        &
      -18  &       296        \\  -158  &      1685 &       -382 &       -223&        156       &
                 652    &     -8362      \\ 
                     4943 &       -43158 &     16014  &     5650  &      -1511     &  -6084  &     129887 \\
           -54113&      607211 &     -282327 &    -46378 &     -27079    &
      -23151 &     -1258172 \\   150466 &     -5348868  &  2845660  &   -150997 &    963371    &1047467  &   7850991 \\
         170837&     31218785 &   -18669471&   5370313 &    -11890233 &  -11634444 &  -30119950 \\
         -16953189&   -122285430&  82279634 &   -38993472&   78057193  &
      68998306&    58177003\\
         64320415&    313612725 &  -232274827 & 133470656 &  -288207851&
      -225232919&  -1425840   \\
       -125125090&  -487990298&  384765126  & -232073969 & 569148965 &
      386696721 &  -187322475 \\
        121312760 &  392623914&   -320537057&  201649792  & -531624753&
      -319797178&  252532328  \\
        -45364410 &  -122396540 & 101857630  & -69231372&   183581748 & 99950648 &   -98555702
      \end{matrix}\right]
\end{multline*}
Using either Matlab or Macaulay2, we can check that the above matrix has full rank $12$. 
Therefore, we arrive at the next proposition. 
\begin{proposition}\label{prop:dominant-33}
The eigenvalue map $\phi : \mathbb{TS}(\mathbb C^3,3)\rightarrow\mathbb C^{12}/\mathfrak{S}(12)$ is dominant. 
\end{proposition}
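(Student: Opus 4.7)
The plan is to instantiate, for the pair $(n,m)=(3,2)$, the general strategy already used in Section~\ref{sec:two}: reduce dominance of $\phi$ to dominance of the coefficient map $\mathbf c : \mathbb{TS}(\mathbb C^3,3)\to\mathbb C^{12}$ via Proposition~\ref{prop:tensor}, and then reduce that to the existence of a single tensor at which the Jacobian of $\mathbf c$ has rank $12$ via Proposition~\ref{prop:differential}. Since $\dim \mathbb{TS}(\mathbb C^3,3)=3\cdot\binom{4}{2}=18\geq 12$, the dimension hypothesis of Proposition~\ref{prop:differential} is in force, and all that remains is to exhibit a point and to verify the rank condition.

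The first concrete step is to obtain a usable closed form for the $12$ components $c_0,\dots,c_{11}$ of $\mathbf c$. For this I would apply Macaulay's resultant formula from Section~\ref{sec:mac} with $d=nm-n+1=4$, so that $w=\binom{6}{4}=15$. Ordering the degree-$4$ monomials in $x_1,x_2,x_3$ lexicographically and forming the multiplier sets $S_1,S_2,S_3$ as in \eqref{mac:matrix}, the eigenvalue system $f_i(A,\lambda,\mathbf x)=0$ gives a $15\times 15$ Macaulay matrix $M-\lambda I$ with entries linear in the $18$ tensor coordinates $a_{ij}$ and in $\lambda$. By \eqref{mac:characteristic}, $\det(\mathcal T-\lambda\mathcal I)=\pm\det(M)/\det(M')$, where $M'$ is the principal submatrix indexed by reduced monomials; for our ordering the reduced factor works out to $(a_{11}-\lambda)^2(a_{23}-\lambda)$.

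The second step is to tame the denominator. Expanding $\det(M-\lambda I)$ in full would produce a rational function, so I would restrict $\mathbf c$ to the $14$-dimensional linear subspace $V\subset\mathbb{TS}(\mathbb C^3,3)$ cut out by $a_{21}=a_{31}=a_{13}=a_{33}=0$. Since $\dim V=14>12$, Proposition~\ref{prop:necessary-general} still leaves room for dominance, and one checks directly that on $V$ the matrix $M-\lambda I$ factors so that the factor $(a_{11}-\lambda)^2(a_{23}-\lambda)$ can be divided out at the matrix level, producing a $12\times 12$ matrix $M'$ whose determinant equals the characteristic polynomial. Reading off the coefficients $c_0,\dots,c_{11}$ from $\det(M'-\lambda I)$ as polynomials in the $14$ remaining coordinates then presents $\mathbf c|_V$ as an explicit polynomial map $\mathbb C^{14}\to\mathbb C^{12}$.

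The third step is the Jacobian computation. I would evaluate the $12\times 14$ Jacobian $H$ of $\mathbf c|_V$ at a generic-looking integer point, such as $(a_{11},a_{12},a_{14},a_{15},a_{16},a_{22},a_{23},a_{24},a_{25},a_{26},a_{32},a_{34},a_{35},a_{36})=(1,2,3,\dots,14)$, and check via \texttt{Macaulay2} or \texttt{Matlab} that the resulting integer matrix has rank $12$; by Proposition~\ref{prop:differential} this proves dominance of $\mathbf c|_V$, hence of $\mathbf c$ and $\phi$. The main obstacle is not conceptual but computational: the Macaulay matrix is $15\times 15$ with polynomial entries, and symbolically extracting its characteristic polynomial and differentiating each coefficient in $14$ variables is heavy. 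The subspace $V$ is chosen precisely to make this computation tractable, while keeping enough parameters so that the image can still fill $\mathbb C^{12}$; one should pick the four vanishing entries so that the denominator $(a_{11}-\lambda)^2(a_{23}-\lambda)$ divides out exactly, which is verified by explicit inspection of the factored Macaulay matrix.
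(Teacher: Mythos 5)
Your proposal matches the paper's proof essentially step for step: reduce to dominance of the coefficient map via Proposition~\ref{prop:tensor}, reduce to a single Jacobian rank computation via Proposition~\ref{prop:differential}, set up the $15\times 15$ Macaulay matrix with extraneous factor $(a_{11}-\lambda)^2(a_{23}-\lambda)$, restrict to the $14$-dimensional subspace $a_{21}=a_{31}=a_{13}=a_{33}=0$ so the denominator divides out and the characteristic polynomial becomes $\det(M'-\lambda I)$ for a $12\times 12$ matrix $M'$, and verify full rank $12$ of the $12\times 14$ Jacobian at the integer point $(1,2,\dots,14)$ by machine computation. Even the choice of subspace and of evaluation point coincide with those in the paper, so this is the same argument; the only thing missing is the actual numerical output, which you correctly defer to Macaulay2/Matlab exactly as the paper does.
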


\subsection{Fourth order three dimensional and third order four dimensional tensors}\label{sec:42-33} 
In this section, we show that the eigenvalue maps $\phi : \mathbb{TS}(\mathbb C^3,4)\rightarrow\mathbb C^{27}/\mathfrak{S}(27)$ and $\phi : \mathbb{TS}(\mathbb C^4,3)\rightarrow\mathbb C^{32}/\mathfrak{S}(32)$ are both dominant. Note that, symbolically, the determinants of tensors of both formats $\mathbb{TS}(\mathbb C^3,4)$ and $ \mathbb{TS}(\mathbb C^4,3)$ are mostly likely to have millions of terms, regarding the relationship between determinants and hyperdeterminnats (cf.\ \cite{o12}) and already the approximately 3 million terms for the hyperdeterminant of tensors in $\mathbb T(\mathbb C^2,4)$ (cf.\ \cite{hsyy08}). It would be impractical or impossible using Macaulay2 to compute out the characteristic polynomial $\operatorname{det}(\mathcal T-\lambda \mathcal I)$ for a symbolic tensor in these two cases. 

For a map $\mathbf f : V\rightarrow W$ between two vector spaces $V$ and $W$, whenever the differential $d_{\mathbf x}\mathbf f$
exists at a point $\mathbf x$, we have that the directional derivative of $\mathbf f$ at direction $\mathbf y\in V$ is
\begin{equation}\label{directional}
\mathbf f^\mathsf{\prime}(\mathbf x;\mathbf y)=(d_{\mathbf x}\mathbf f) \mathbf y.
\end{equation}
Let $\operatorname{dim}(V)\geq \operatorname{dim}(W)$. 
As a linear map from $V$ to $W$, $d_{\mathbf x}\mathbf f$ is of maximal rank $\operatorname{dim}(W)$ if we can find a set of directions $\{\mathbf y_1,\dots,\mathbf y_k\}\subset V$ with $k\geq \operatorname{dim}(W)$ such that
\[
\operatorname{rank}\big([(d_{\mathbf x}\mathbf f) \mathbf y_1,\ \dots,\ (d_{\mathbf x}\mathbf f) \mathbf y_k]\big)=\operatorname{dim}(W).
\]
Note that here our map is the coefficient map $\mathbf c$ (cf.\ \eqref{coefficient-map}). $V$ is either $\mathbb{TS}(\mathbb C^3,4)$ or $ \mathbb{TS}(\mathbb C^4,3)$, and $W$ is respectively either $\mathbb C^{27}$ or $\mathbb C^{32}$. In both cases, we choose $k=\operatorname{dim}(V)$. 
We use formula \eqref{directional} to compute $(d_{\mathbf x}\mathbf f) \mathbf y_i$ for each $i=1,\dots,k$. 
We first choose a point $\mathcal T\in V$ and a set of directions $\{\mathcal T_1,\dots,\mathcal T_k\}$, which will be chosen as the set of standard basis of the space $V$. 
Then, we compute the characteristic polynomial of  $\mathcal T + t\mathcal T_i$ with parameter $t$
\[
\operatorname{det}(\mathcal T+t\mathcal T_i-\lambda\mathcal I)
\]
for all $i=1,\dots,k$. Note that we have only two symbolic variables $\lambda$ and $t$ now. 
Write $\operatorname{det}(\mathcal T+t\mathcal T_i-\lambda\mathcal I)$ as
\[
\operatorname{det}(\mathcal T+t\mathcal T_i-\lambda\mathcal I)=\sum_{s=0}^Nc_s(t)\lambda^s,
\]
for appropriate $N$, which is either $27$ or $32$. Note that $c_N(t)=\pm 1$. 
It follows that
\[
\mathbf c^\mathsf{\prime}(\mathcal T;\mathcal T_i)=(d_{\mathcal T}\mathbf c) \mathcal T_i=(c_{N-1}^\mathsf{\prime}(0),\dots,c_0^\mathsf{\prime}(0))^\mathsf{T}. 
\]
In this way, we can try to find a tensor $\mathcal T\in V$ such that the resulting matrix
\[
[(d_{\mathcal T}\mathbf c) \mathcal T_1,\ \dots, (d_{\mathcal T}\mathbf c) \mathcal T_k]
\]
has full rank. In fact, if such a tensor $\mathcal{T}$ exists then a generic tensor will work. For $V=\mathbb{TS}(\mathbb C^3,4)$, the differential of the coefficient map at the tensor point (only independent entries are listed)
\begin{align*}
&t_{1111}=1,\ t_{1112}=-1/3,\ t_{1122}=2/3,\ t_{1222}=-2,\
t_{1113}=1,\ t_{1123}=-1/2,\ t_{1223}=4/3,\\ 
&t_{1133}=-4/3,\
t_{1233}=5/3,\ t_{1333}=-5,
t_{2111}=6,\ t_{2112}=-2,\ t_{2122}=7/3,\ t_{2222}=-7,\
t_{2113}=8/3,\\ 
&t_{2123}=-4/3,\ t_{2223}=3,\ t_{2133}=-3,\
t_{2233}=1/3,\ t_{2333}=2,\
t_{3111}=3,\ t_{3112}=4/3,\ t_{3122}=5/3,\\ 
&t_{3222}=6,\
t_{3113}=0, t_{3123}=-1/6,\ t_{3223}=-2/3,\ t_{3133}=-1,\
t_{3233}=-4/3,\ t_{3333}-5
\end{align*}
is of full rank $27$; 
and for $V=\mathbb{TS}(\mathbb C^4,3)$, the differential of the coefficient map at the tensor point (again, only independent entries are listed)
\begin{align*}
&t_{111}=1,\ t_{112}=-1/2,\ t_{122}=2,\ t_{113}=-1,\ t_{123}=3/2,\ t_{133}=-3,
t_{114}=2,\ t_{124}=-2,\ t_{134}=5/2,\\ 
&t_{144}=-5,
t_{211}=6,\ t_{212}=-3,\ t_{222}=7,\ t_{213}=-7/2,\ t_{223}=4,\ t_{233}=-8,
t_{214}=9/2,\ t_{224}=-9/2,\\ 
&t_{234}=1/2,\ t_{244}=2,
t_{311}=3,\ t_{312}=2,\ t_{322}=5,\ t_{313}=3,\ t_{323}=0,\ t_{333}=-1,
t_{314}=-1,\\ 
& t_{324}=-3/2,\ t_{334}=-2,\ t_{344}-5,
t_{411}=1,\ t_{412}=1,\ t_{422}=3/2,\ t_{413}=2,\ t_{423}=5/2,\ t_{433}=6,\\
&t_{414}=7/2,\ t_{424}=4,\ t_{434}=9/2,\ t_{444}=10
\end{align*}
is of full rank $32$.

Therefore, we have the next result from Propositions~\ref{prop:differential} and \ref{prop:tensor}. 
\begin{proposition}\label{prop:dominant-34-42}
The eigenvalue maps $\phi : \mathbb{TS}(\mathbb C^3,4)\rightarrow\mathbb C^{27}/\mathfrak{S}(27)$ and
$\phi : \mathbb{TS}(\mathbb C^4,3)\rightarrow\mathbb C^{32}/\mathfrak{S}(32)$
are both dominant. 
\end{proposition}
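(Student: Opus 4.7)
The plan is to reduce the claim, by Propositions~\ref{prop:differential} and~\ref{prop:tensor}, to producing, for each of the two exceptional formats, a single tensor $\mathcal{T}$ at which the differential $d_{\mathcal{T}}\mathbf{c}$ of the coefficient map $\mathbf{c}:\mathbb{TS}(\mathbb C^n,m+1)\to\mathbb C^{nm^{n-1}}$ has rank equal to $nm^{n-1}$ (i.e.\ $27$ in the first case and $32$ in the second). Since $\dim\mathbb{TS}(\mathbb C^3,4)=30$ and $\dim\mathbb{TS}(\mathbb C^4,3)=40$, the necessary dimension inequality of Proposition~\ref{prop:necessary-general} is satisfied, so the remaining content is entirely the rank computation.

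The naive strategy, namely writing down $\operatorname{det}(\mathcal{T}-\lambda\mathcal{I})$ symbolically via Macaulay's formula \eqref{mac:characteristic} and then differentiating with respect to each tensor entry, is computationally out of reach: as noted in Section~\ref{sec:42-33}, the hyperdeterminantal structure forces the characteristic polynomial of a generic tensor of these formats to have on the order of millions of monomials. To circumvent this, I would use the directional-derivative identity \eqref{directional}: the $i$-th column of the Jacobian at $\mathcal{T}$ is exactly $\mathbf{c}'(\mathcal{T};\mathcal{T}_i)$, which can be read off from the univariate expansion
\[
\operatorname{det}(\mathcal{T}+t\mathcal{T}_i-\lambda\mathcal{I})=\sum_{s=0}^N c_s(t)\lambda^s
\]
by taking $c_s^\mathsf{\prime}(0)$ for $s=0,\dots,N-1$. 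Crucially this only ever involves the two symbolic variables $\lambda$ and $t$, so each such determinant is a tractable Macaulay2 computation.

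Concretely, I would fix a rational point $\mathcal{T}$ and let $\{\mathcal{T}_1,\dots,\mathcal{T}_k\}$ be the standard basis of $\mathbb{TS}(\mathbb C^n,m+1)$; for each basis direction compute the univariate polynomial above, read off the $k$ column vectors $(d_{\mathcal{T}}\mathbf{c})\mathcal{T}_i\in\mathbb C^{nm^{n-1}}$, and assemble the resulting $nm^{n-1}\times k$ matrix. It then remains to verify (numerically, e.g.\ by row-reducing or computing a maximal-rank minor over $\mathbb{Q}$) that this matrix has the required full row rank. Once this holds for one point, Proposition~\ref{prop:differential} concludes that $\mathbf{c}$ is dominant, and Proposition~\ref{prop:tensor} transfers the conclusion to $\phi$.

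The only real obstacle is choosing $\mathcal{T}$: the locus of rank deficiency of the Jacobian is Zariski closed and, by Lemma~\ref{lemma:size} applied to the admissible cases, one expects it to be a proper subvariety, so a generic rational perturbation will succeed. In practice I would pick entries to be distinct small rationals, chosen so that no obvious symmetries collapse the rank (for instance, avoiding diagonal or sparse patterns), and, if unlucky, perturb until the rank test passes. This is entirely a verification problem; no structural argument is needed beyond the two reductions above.
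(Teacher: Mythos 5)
Your proposal follows essentially the same route as the paper: reduce to full rank of the differential of $\mathbf{c}$ via Propositions~\ref{prop:differential} and~\ref{prop:tensor}, sidestep the infeasible symbolic characteristic polynomial by computing directional derivatives $\mathbf{c}'(\mathcal{T};\mathcal{T}_i)$ from the two-variable determinants $\operatorname{det}(\mathcal{T}+t\mathcal{T}_i-\lambda\mathcal{I})$, and verify full rank at a concrete rational point. The paper simply records the specific witness tensors for $(n,m)=(3,3)$ and $(4,2)$ at which the rank is $27$ and $32$, respectively.
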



\section{Final remarks}\label{sec:remarks}
\subsection{Reconstruction of a tensor}
The understanding of the ranges of the multiset-valued maps would be an essential step to understand the configurations of the eigenvalues of general tensors \footnote{There is a parallel research on another concept of eigenvectors \cite{ass}.}.
\begin{proposition}\label{prop:sub}
For any integers $m,n\geq 2$, there is a set $W\subset\mathbb C^{nm^{n-1}}/\mathfrak{S}(nm^{n-1})$ whose closure has dimension $2\lfloor \frac{n}{2}\rfloor m$ contained in the closure of the image $\phi(\mathbb{T}(\mathbb C^n,m+1))$. 
\end{proposition}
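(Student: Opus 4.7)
The plan is to construct an explicit $2\lfloor n/2\rfloor m$-dimensional subvariety inside the image by restricting to \emph{block-diagonal tensors} with $k:=\lfloor n/2\rfloor$ blocks of dimension $2$. Decompose $\mathbb{C}^n = V_1\oplus\cdots\oplus V_k\oplus V_\infty$ with $\dim V_j=2$ and $V_\infty$ equal to $\{0\}$ or $\mathbb{C}\cdot e_n$ according as $n$ is even or odd, and let $U\subset\mathbb{T}(\mathbb{C}^n,m+1)$ denote the linear subspace of tensors $\mathcal{T}=\bigoplus_{j=1}^k \mathcal{T}^{(j)}$ supported on $V_1\oplus\cdots\oplus V_k$: every entry $t_{i i_1\cdots i_m}$ vanishes unless the indices $i,i_1,\ldots,i_m$ all lie in a common block $\{2j-1,2j\}$, and in particular the $n$-th slice vanishes when $n$ is odd. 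Each $\mathcal{T}^{(j)}$ is canonically an element of $\mathbb{T}(\mathbb{C}^2,m+1)$.

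For such $\mathcal{T}$, any eigenvector of $\mathcal{T}^{(j)}$ extends by zero to an eigenvector of $\mathcal{T}$ with the same eigenvalue, so the block eigenvalues $\bigsqcup_j \sigma(\mathcal{T}^{(j)})$ embed as a sub-multiset of $\sigma(\mathcal{T})$. For generic blocks with pairwise disjoint spectra, no eigenvector of $\mathcal{T}$ can have support in two distinct $V_j$'s, since such a vector would require $\lambda$ to be a common eigenvalue of two blocks; and when $n$ is odd, any eigenvector with $x_n\neq 0$ satisfies $\lambda x_n^m = 0$ and hence $\lambda=0$. The key step is to promote this geometric picture to the factorization
\[
\chi_{\mathcal{T}}(\lambda) \;=\; \lambda^{c}\cdot\prod_{j=1}^k \chi_{\mathcal{T}^{(j)}}(\lambda)^{m^{n-2}}, \qquad c=(n-2k)m^{n-1}\in\{0,m^{n-1}\},
\]
which I would establish via the Macaulay-resultant formulation of Section~\ref{sec:mac} applied to the decoupled system $\{\mathcal{T}^{(j)}(\mathbf{x}^{(j)})^m=\lambda(\mathbf{x}^{(j)})^{[m]}\}_j$ (together with the degenerate equation $\lambda x_n^m=0$ for odd $n$): the Macaulay matrix inherits a block-diagonal structure from the variable decoupling, and its determinant factors as above with the exponents $m^{n-2}$ arising from the combinatorics of monomials of degree $nm-n+1$ in the block variables.

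Granting this factorization, define $W$ to be the set of multisets of the form $\{0\}^{(c)}\sqcup\bigsqcup_{j=1}^k S_j^{(m^{n-2})}$, where $S_j\in\mathbb{C}^{2m}/\mathfrak{S}(2m)$ and $S_j^{(m^{n-2})}$ denotes repeating each entry of $S_j$ exactly $m^{n-2}$ times. By Proposition~\ref{prop:dominant-n2} each block eigenvalue map is dominant, and the $k$ blocks vary independently; thus $W$ is the image of a dominant map from a $k$-fold product of $2m$-dimensional varieties (followed by a finite-to-one forgetful identification of the block origins), so $\overline{W}$ has dimension $2km = 2\lfloor n/2\rfloor m$, and the above factorization yields $W\subseteq\phi(U)\subseteq\phi(\mathbb{T}(\mathbb{C}^n,m+1))$.

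The main obstacle is justifying the precise multiplicity $m^{n-2}$ on each block factor and the exponent $c$ on $\lambda$. The \emph{support} of $\chi_{\mathcal{T}}$, i.e., which $\lambda$ are eigenvalues, is geometrically transparent from the decoupling, but matching algebraic multiplicities against the total degree $nm^{n-1}$ requires careful bookkeeping with the Macaulay matrix of the decoupled system. An alternative that avoids the exact multiplicity count is to invoke Proposition~\ref{prop:differential} in the spirit of Section~\ref{sec:42-33}: it suffices to exhibit one tensor $\mathcal{T}_0\in U$ at which the differential of $\mathbf{c}|_U$ has rank at least $2km$, and this decouples into $k$ independent rank-$2m$ checks on each $2$-dimensional block, where non-singularity of the block Jacobian was already established in Lemma~\ref{lemma:nonsingular}.
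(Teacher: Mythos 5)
Your core construction is the same as the paper's: build block-diagonal tensors out of $\lfloor n/2\rfloor$ copies of $\mathbb{T}(\mathbb{C}^2,m+1)$ (plus a trailing scalar when $n$ is odd) and use the factorization of the characteristic polynomial into block factors together with Proposition~\ref{prop:dominant-n2}. The difference is in how much machinery is deployed and where a gap remains.

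Where your proposal diverges, it does more work than is necessary. You try to pin down the exact exponents (the power $m^{n-2}$ on each block factor and the power $c=(n-2k)m^{n-1}$ on $\lambda$) by analyzing the Macaulay matrix of the decoupled system, and you flag this as the main obstacle. The paper sidesteps that entirely: it cites \cite{hhlq13} for a factorization of the form $\operatorname{det}(\mathcal{T}-\lambda\mathcal{I})=(\alpha-\lambda)^q\prod_i\bigl[\operatorname{det}(\mathcal{A}_i-\lambda\mathcal{I})\bigr]^p$ with \emph{some} positive integers $p,q$, and that is all the dimension count needs. The point is that the map $(S_1,\dots,S_k)\mapsto\{\alpha\}^{(q)}\sqcup\bigsqcup_j S_j^{(p)}$ is generically finite-to-one for any fixed positive integers $p,q$ (for generic inputs with simple disjoint spectra the fiber has size at most $k!$), so the closure of the image has dimension $2km$ by Proposition~\ref{prop:dominant-n2}. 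For the record, your claimed exponents are in fact the correct ones (they follow from the block-resultant identity $\operatorname{Res}(\mathcal{A}\oplus\mathcal{B})=\operatorname{Res}(\mathcal{A})^{m^{n_B}}\operatorname{Res}(\mathcal{B})^{m^{n_A}}$ and a degree count), but the proposition does not require them, and establishing them via a Macaulay-matrix bookkeeping argument is a nontrivial detour.

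Your alternative route via Proposition~\ref{prop:differential} is in the right spirit but as stated has a real gap. You claim the rank check ``decouples into $k$ independent rank-$2m$ checks on each block,'' but the coefficient map $\mathbf{c}|_U$ does \emph{not} literally decouple: it is the composition of the block coefficient maps $(\mathcal{A}_1,\dots,\mathcal{A}_k)\mapsto(\mathbf{c}(\mathcal{A}_1),\dots,\mathbf{c}(\mathcal{A}_k))$, which is block-diagonal and of rank $2km$ by Lemma~\ref{lemma:nonsingular}, with the polynomial-product map $(\chi_1,\dots,\chi_k)\mapsto \lambda^c\prod_j\chi_j^p$. You would need to additionally argue that this second map has differential of rank $2km$ at a compatible generic point (true, because it is generically finite onto its image, but this is an extra step and is not what ``independent block checks'' literally says). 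Once you add that step, or simply replace your exponent-chasing with the citation to \cite{hhlq13} and the generic-finiteness observation, your argument matches the paper's.
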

\begin{proof}
For any $n\geq 2$, we can take $\lfloor \frac{n}{2}\rfloor$ tensors $\mathcal A_i\in \mathbb{T}(\mathbb C^2,m+1)$, and possibly a scalar $\alpha$ (when $n$ is odd) as subtensors to form a diagonal block tensor $\mathcal T\in\mathbb{T}(\mathbb C^n,m+1)$. It follows from \cite{hhlq13} that
\[
\operatorname{det}(\mathcal T-\lambda\mathcal I)=(\alpha-\lambda)^q\prod_{i=1}^{\lfloor \frac{n}{2}\rfloor}\big[\operatorname{det}(\mathcal A_i-\lambda\mathcal I)\big]^p
\]
for some positive integers $p,q$. So, it is clear that the set $\phi(\mathbb{T}(\mathbb C^2,m+1))\times \dots\times \phi(\mathbb{T}(\mathbb C^2,m+1))$ with $\lfloor \frac{n}{2}\rfloor$ copies can be embedded into $\phi(\mathbb{T}(\mathbb C^n,m+1))$. It then follows from Proposition~\ref{prop:dominant-n2} that the closure of this set has dimension $2\lfloor \frac{n}{2}\rfloor m$. 
\end{proof}
Similarly, we can use Propositions~\ref{prop:dominant-33} and \ref{prop:dominant-34-42} to refine the blocks to get a variety with larger dimension in some cases. However, in general it is far away from the following expected dimension \eqref{eqn:expect}.  
\subsection{The dimension of the image of $\phi$}
It follows from Theorem~\ref{theorem:dominant} that for most tensor spaces $\mathbb T(\mathbb C^n,m+1)$, the multiset-valued eigenvalue map is not dominant. Thus, it is reasonable to expect that the dimension of $\overline{\phi(\mathbb{T}(\mathbb C^n,m+1))}$  is 
\begin{equation}\label{eqn:expect}
\min\bigg\{n{n+m-1\choose m}, nm^{n-1}\bigg\},
\end{equation}
since $\phi(\mathbb{T}(\mathbb C^n,m+1))=\phi(\mathbb{TS}(\mathbb C^n,m+1))$ and $\operatorname{dim}(\mathbb{TS}(\mathbb C^n,m+1))=n{n+m-1\choose m}$.
We also want to point out that \eqref{eqn:expect} may not hold for all $m,n\geq 2$. We tested, by a similar method as Section~\ref{sec:42-33}, the case $(n,m)=(3,4)$. Note that the tensor space is of dimension $45$ while the number of eigenvalues is $48$. 
However, the ranks of the resulting Jacobian matrices for both the following two points (only independent elements are listed) are of the same value $43$. 
\begin{align*}
&t_{11111}=0,\ t_{11112}=3/4,\ t_{11122}=5/6,\ t_{11222}=1/4,\ t_{12222}=0,\ t_{11113}=-5/4,
t_{11123}=-1/6,\\
& t_{11223}=-1/6,\ t_{12223}=5/4,\
t_{11133}=-1/2,\ t_{11233}=-1/3,\ t_{12233}=-1/6,\ t_{11333}=-1,\\
& t_{12333}=1/2,
 t_{13333}=4,\ t_{21111}=3,\ t_{21112}=5/4,\ t_{21122}=-1/2,\ t_{21222}=1,\ t_{22222}=-2,\\
 & t_{21113}=1,\ t_{21123}=1/6,\ t_{21223}=-5/12,\ t_{22223}=-1,\
t_{21133}=-1/6,\ t_{21233}=0,\ t_{22233}=-1/3,\\
& t_{21333}=1,\ t_{22333}=-5/4,\
 t_{23333}=-1,
t_{31111}=-3,\ t_{31112}=-5/4,\ t_{31122}=-1/3,\\
& t_{31222}=-1/2,\
 t_{32222}=0,\ t_{31113}=3/4,\
t_{31123}=1/6,\
 t_{31223}=1/3,\  t_{32223}=2/3,\ 
t_{31133}=-2/3,\\
& t_{31233}=-1/6,\
 t_{32233}=1/3,\ t_{31333}=1,\ t_{32333}=-1,\ t_{33333}=0, 
\end{align*}
and
\begin{align*}
&t_{11111}=7,\ t_{11112}=-3/2,\ t_{11122}=-4/3,\ t_{11222}=-9/4,\ t_{12222}=8,\ t_{11113}=7/4,
t_{11123}=3/4,\\
& t_{11223}=7/12,\ t_{12223}=-7/6,\ 
t_{11133}=5/6,\ t_{11233}=-1/12,\ t_{12233}=5/6,\ t_{11333}=1,\ t_{12333}=9/4,\\ 
& t_{13333}=0,\ t_{21111}=10,\ t_{21112}=-1,\ t_{21122}=0,\ t_{21222}=5/4,\ t_{22222}=-1,\ t_{21113}=7/4,\\
&
t_{21123}=7/12,\
 t_{21223}=0,\ t_{22223}=-7/6,\ 
t_{21133}=-7/6,\ t_{21233}=-1/4,\ t_{22233}=5/6,\\
& t_{21333}=-5/2,\
 t_{22333}=1,\
 t_{23333}=6,\ t_{31111}=8,\ t_{31112}=-3/2,\ t_{31122}=-1/3,\ t_{31222}=-5/4,\\
 & t_{32222}=4,\
 t_{31113}=9/4,\
t_{31123}=3/4,\ t_{31223}=-1/3,\ t_{32223}=-4/3,\ 
t_{31133}=1/6,\\ & t_{31233}=5/6,\ t_{32233}=-1,\ t_{31333}=3/2,\ t_{32333}=5/2,\ t_{33333}=6.  
\end{align*}
Nevertheless, we have considerable confidence to believe that \eqref{eqn:expect} is true for all but a finite exceptions, as such phenomena happen in tensor problems, e.g., the famous Alexander-Hirschowitz theorem for symmetric tensor rank \cite{ah95}.   
\subsection{The closedness of the image of $\phi$} We proved in Proposition \ref{prop:dominant-n2} that for a generic multiset of total multiplicity $2m$ of complex numbers, the inverse eigenvalue problem is solvable. A natural question to ask is: is the inverse eigenvalue problem solvable for any multiset of total multiplicity $2m$ of complex numbers, i.e., is $\phi$ surjective? It is easy to show that when $m=2$, the answer is affirmative. To this end, we will need the following result.
\begin{proposition}[\cite{f77}]\label{thm:friedland}
Let $f=(f_1,\dots,f_n):\mathbb{C}^n\to \mathbb{C}^n$ be a polynomial map where each $f_i$ is homogeneous. If $f(x_1,\dots, x_n)=0$ has only a trivial solution $(0,\dots,0)$ then $f(x_1,\dots,x_n)=\omega$ is always solvable for any $\omega\in \mathbb{C}^n$.
\end{proposition}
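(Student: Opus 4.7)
The plan is to prove this via a projective Bezout-style counting argument. Write $d_i := \deg(f_i) \geq 1$; strict positivity follows because $f(0) = 0$ forces each homogeneous $f_i$ to have no constant term. Fix $\omega = (\omega_1, \ldots, \omega_n) \in \mathbb{C}^n$ and introduce a homogenizing variable $x_0$, setting
\[
F_i(x_0, x_1, \ldots, x_n) := f_i(x_1, \ldots, x_n) - \omega_i x_0^{d_i}, \qquad i = 1, \ldots, n.
\]
Each $F_i$ is homogeneous of degree $d_i$ in $(x_0, x_1, \ldots, x_n)$, and solutions of $f(x) = \omega$ in $\mathbb{C}^n$ correspond bijectively to points of the projective variety $V := \{F_1 = \cdots = F_n = 0\} \subseteq \mathbb{P}^n$ that lie in the affine chart $\{x_0 \neq 0\}$. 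The goal therefore becomes showing $V$ is nonempty and avoids the hyperplane at infinity.

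First I would inspect the hyperplane $\{x_0 = 0\}$. There each $F_i$ restricts to $f_i(x_1, \ldots, x_n)$, so a point of $V$ at infinity would be a nonzero projective point annihilated by every $f_i$. The hypothesis that $f(x) = 0$ has only the trivial solution forbids any such point, giving $V \cap \{x_0 = 0\} = \emptyset$. This absence forces $\dim V = 0$: any positive-dimensional projective subvariety of $\mathbb{P}^n$ must meet every hyperplane, so a positive-dimensional component of $V$ would necessarily intersect $\{x_0 = 0\}$, a contradiction.

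With $V$ zero-dimensional, Bezout's theorem in $\mathbb{P}^n$ applies and yields
\[
\sum_{p \in V} \mu_p(F_1, \ldots, F_n) \;=\; \prod_{i=1}^n d_i \;\geq\; 1,
\]
so $V$ is nonempty; by the previous step every point of $V$ already lies in the affine chart $\{x_0 \neq 0\}$, hence corresponds to a genuine solution $x \in \mathbb{C}^n$ of $f(x) = \omega$.

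The main obstacle is the dimensionality reduction: one needs a form of projective intersection theory to conclude that the emptiness of $V \cap \{x_0 = 0\}$ rules out positive-dimensional components of $V$. Once that is secured, the positivity of $\prod_i d_i$ delivers a solution essentially for free, which explains why the hypothesis $f^{-1}(0) = \{0\}$ — precisely the condition eliminating the infinity contribution — is exactly the right one for the conclusion.
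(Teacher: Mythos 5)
The paper does not actually prove this proposition; it is quoted from Friedland \cite{f77}, so there is no in-paper argument to compare against. Your proof is a correct and standard algebraic-geometric proof of the statement. Two small remarks. First, the step establishing $d_i \geq 1$ deserves one extra word: $f(0)=0$ rules out $f_i$ being a nonzero constant, but you also need to observe that $f_i \equiv 0$ is impossible (since then the common zero locus of the remaining $n-1$ polynomials would be a positive-dimensional subvariety of $\mathbb{C}^n$, contradicting $f^{-1}(0)=\{0\}$); this also excludes the degenerate degree-$0$ case. Second, the detour through ``$\dim V = 0$'' followed by Bezout is slightly heavier than necessary and reads as though $V\neq\emptyset$ is being assumed before it is established. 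The cleaner route is the projective dimension theorem directly: the intersection of $n$ hypersurfaces in $\mathbb{P}^n$ always has every component of dimension $\geq 0$, hence $V\neq\emptyset$; combined with $V\cap\{x_0=0\}=\emptyset$, any point of $V$ already lies in the affine chart and gives a solution, and Bezout is not needed at all. (Your Bezout version does work if one uses the form of Bezout that allows an a priori empty finite intersection and concludes the multiplicity count is $\prod d_i > 0$, so it delivers nonemptiness too — but the dimension theorem is the primitive fact underlying it.) For context, Friedland's original argument is topological, computing the Brouwer degree of the restriction of $f$ to a sphere and showing it equals $\prod d_i > 0$; your projective-closure argument is the natural algebraic counterpart.
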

\begin{proposition}[Cubic plane tensor]\label{prop:closedness m=2}
Given any multiset $S$ of total multiplicity four of complex numbers, there exists a tensor $\mathcal{T}$ in $\mathbb{T}(\mathbb{C}^2,3)$ such that the set of eigenvalues of $\mathcal{T}$ is $S$.
\end{proposition}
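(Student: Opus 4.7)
The plan is to apply Friedland's theorem (Proposition~\ref{thm:friedland}) to a carefully chosen restriction of the coefficient map. By Lemma~\ref{lemma:roots} (equivalently Proposition~\ref{prop:tensor}), showing that $\phi$ is surjective is the same as showing that the coefficient map $\mathbf c:\mathbb{TS}(\mathbb C^2,3)\to\mathbb C^4$ is surjective. Since $\operatorname{dim}(\mathbb{TS}(\mathbb C^2,3))=6$ exceeds the target dimension, Friedland's theorem does not apply directly; I would restrict $\mathbf c$ to a $4$-dimensional subspace on which surjectivity can still be verified.

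The natural choice is the symmetric summand in the decomposition $\mathbb{TS}(\mathbb C^2,3)=\operatorname{S}^3\mathbb C^2\oplus(\wedge^2\mathbb C^2\otimes\operatorname{S}^1\mathbb C^2)$ already used in Section~\ref{sec:ext}. Parameterise $\operatorname{S}^3\mathbb C^2\cong\mathbb C^4$ by $a=t_{111},\,b=t_{112},\,c=t_{122},\,d=t_{222}$, which corresponds to the Sylvester-matrix entries $(a_0,a_1,a_2,b_0,b_1,b_2)=(a,2b,c,b,2c,d)$. The components $c_3,c_2,c_1,c_0$ of $\mathbf c|_{\operatorname{S}^3\mathbb C^2}$ are then homogeneous polynomials in $(a,b,c,d)$ of degrees $1,2,3,4$ respectively, so Friedland's theorem applies as soon as one checks the trivial-zero-fiber condition: if every eigenvalue of $\mathcal T\in\operatorname{S}^3\mathbb C^2$ is zero, then $\mathcal T=0$.

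This zero-fiber check is the technical heart of the proof and the only place where real computation is unavoidable, because over $\mathbb C$ there exist nonzero nilpotent symmetric tensors a priori and one must rule them out in this format. I would proceed directly: $c_3=-2(a+d)=0$ forces $d=-a$, and expanding $\det(M-\lambda I)$ with this substitution yields
\[
c_2=-2(a^2+3bc),\qquad c_1=-4(b^3+c^3),\qquad c_0=a^4+4a(c^3-b^3)+6a^2bc-3b^2c^2.
\]
Setting $c_1=0$ gives $c=-\zeta b$ for a cube root $\zeta$ of unity, and combining with $c_2=0$ (which reads $a^2=-3bc=3\zeta b^2$) reduces $c_0=0$ to $b^{3}(2a+3\zeta^{2}b)=0$. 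A short case analysis on $\zeta$ shows that in every case $b=0$, whence $c=0$, $a=0$, and so $d=0$.

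Once the zero fiber is shown to be $\{0\}$, Friedland's theorem yields that $\mathbf c|_{\operatorname{S}^3\mathbb C^2}:\mathbb C^4\to\mathbb C^4$ is surjective; the surjectivity of $\mathbf c$ on all of $\mathbb{TS}(\mathbb C^2,3)$ follows immediately, and Lemma~\ref{lemma:roots} then delivers the surjectivity of $\phi$, which is exactly the claim. The main obstacle is thus computational rather than conceptual: organising the Sylvester expansion and the cube-root case analysis cleanly enough to expose the single scalar condition that pins down $\mathcal T=0$.
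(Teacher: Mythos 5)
Your proof is correct, and it follows the same high-level strategy as the paper's (restrict the coefficient map $\mathbf c$ to a $4$-dimensional linear subspace of $\mathbb{TS}(\mathbb C^2,3)$ and apply Friedland's theorem after checking the zero fiber is trivial). Where you differ is the choice of subspace and the verification of the zero-fiber condition. The paper uses the somewhat ad hoc linear slice $L$ cut out by $a_1+b_1+b_2=0,\ a_2+b_0=0$ inside the $(a_0,\dots,b_2)$ parameter space, and checks $L\cap\mathbf c^{-1}(\mathbf 0)=\{\mathbf 0\}$ by Macaulay2; you instead restrict to the natural $\operatorname{GL}_2$-equivariant summand $\operatorname{S}^3\mathbb C^2$ from the decomposition already set up in Section~\ref{sec:ext}, with Sylvester parameters $(a_0,a_1,a_2,b_0,b_1,b_2)=(a,2b,c,b,2c,d)$, and carry out the zero-fiber check by hand. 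I have re-derived your formulas: with $d=-a$, indeed $c_2=-2(a^2+3bc)$, $c_1=-4(b^3+c^3)$, and $c_0=a^4+4a(c^3-b^3)+6a^2bc-3b^2c^2$; substituting $c=-\zeta b$ ($\zeta^3=1$) and $a^2=3\zeta b^2$ collapses $c_0$ to $-4b^3(2a+3\zeta^2 b)$, and the alternative $a=-\tfrac{3}{2}\zeta^2 b$ forces $a^2=\tfrac{9}{4}\zeta b^2$, contradicting $a^2=3\zeta b^2$ unless $b=0$, whence $a=c=d=0$. Your route has the advantage of being fully elementary and computer-free, and picking the symmetric summand makes the restriction conceptually natural; the paper's route is shorter on the page but delegates the key verification to software. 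Both establish the same statement.
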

\begin{proof}
As before, we use $a_0,a_1,a_2,b_0,b_1,b_2$ to parametrize $\mathbb{TS}(\mathbb C^2,3)$ which is isomorphism to $\mathbb{C}^6$. Let $\mathbf{c}: \mathbb{C}^6\to \mathbb{C}^4$ be the map sending the vector $(a_0,a_1,a_2,b_0,b_1,b_2)$ to $(c_1,c_2,c_3,c_4)$ where $c_i$ is the codegree $i$ coefficient of the characteristic polynomial of the tensor determined by $(a_0,a_1,a_2,b_0,b_1,b_2)$, $i=1,\dots,4$. Hence $c_i$'s are homogeneous polynomials of degree $(4-i)$ respectively. By Proposition~\ref{thm:friedland} and Proposition~\ref{prop:tensor}, it is sufficent to find a four dimension linear subspace $L\subset \mathbb{C}^6$ such that $\mathbf c^{-1}((0,0,0,0))\cap L$ is $(0,0,0,0)$. We consider the linear subspace $L$ defined by equations 
\[
a_1+b_1+b_2 =0, a_2+b_0 =0.
\]
Then it is easy to verify that $L\cap \mathbf c^{-1}((0,0,0,0))$ is $(0,0,0,0)$.
\end{proof}
\begin{remark}
We use Macaulay2 to see that $L\cap \mathbf c^{-1}((0,0,0,0))$ is $(0,0,0,0)$. Since $n$ generic homogeneous polynomials only have a trivial solution, the existence of $L$ in the proof of Proposition \ref{prop:closedness m=2} implies that a generic four dimensional subspace of $\mathbb{C}^6$ should work. 
\end{remark}
\begin{remark}
It is tempting to extend the proof of Proposition \ref{prop:closedness m=2} to show that $\mathbf c$ is surjective in general. However, on one hand it is difficult to compute the intersection of $\mathbf c^{-1}(\mathbf 0)$ with a generic linear space of dimension $2m$ in general. On the other hand, when $m=3$ the dimension of $\mathbf c^{-1}(\mathbf 0)$ is three which is larger than the expected dimension two, hence the method used for $m=2$ does not work for $m=3$.
\end{remark}
\subsection*{Acknowledgement} 
This work is partially supported by
National Science Foundation of China (Grant No. 11401428). 
\bibliographystyle{model6-names}

\end{document}